\documentclass[12pt,a4paper]{amsart}
\usepackage{cmap} 
\usepackage[T2A,T1]{fontenc} 

\usepackage{amssymb}
\usepackage{amsthm}
\usepackage{amsmath}
\usepackage{mathrsfs}
\usepackage[pdftex,unicode]{hyperref}
\usepackage[noadjust]{cite}

\usepackage[T2A,T1]{fontenc}

\usepackage{stackengine}
\stackMath
\def\cl#1{\stackengine{0pt}{}{\overline{\phantom{#1}}}{O}{l}{F}{T}{L}#1}

\def\es{\varnothing}
\def\om{\omega}
\def\al{\alpha}
\def\Q{\mathbb{Q}}
\def\R{\mathbb{R}}
\def\norm{\lVert\boldsymbol\cdot\rVert}

\def\A{\mathcal{A}}
\def\B{\mathcal{B}}

\def\Int{\operatorname{Int}}
\def\cl#1{\overline{#1}}
\def\supp{\operatorname{supp}}

\newtheorem{theorem}{Theorem}[section]
\newtheorem{lemma}[theorem]{Lemma}

\newtheorem{proposition}[theorem]{Proposition}
\newtheorem{corollary}[theorem]{Corollary}

\theoremstyle{definition}
\newtheorem{definition}[theorem]{Definition}

\newtheorem{question}{Question}

\theoremstyle{remark}
\newtheorem{remark}[theorem]{Remark}
\newtheorem{example}[theorem]{Example}

\begin{document}

{
\renewcommand*{\thefootnote}{$\star$}

\title[Metrizable and Fr\'{e}chet--Urysohn subgroups of TVS]{Metrizable and Fr\'{e}chet--Urysohn\\ subgroups and linear subspaces\\ of topological vector spaces}
\footnotetext[0]{The work of O.~Sipacheva was financially supported by the Russian Science Foundation, grant~22-11-00075-P.}

}

\author[Arkady Leiderman, Evgenii Reznichenko, and Ol'ga Sipacheva]{Arkady Leiderman, Evgenii Reznichenko, and Ol'ga Sipacheva}
\address{Department of Mathematics, Ben-Gurion University of the Negev, 
Beer Sheva, P.O.B. 653, Israel}
\email{arkady@math.bgu.ac.il}  
\address{Department of General Topology and Geometry, Faculty of Mechanics and Mathematics, M.~V.~Lomonosov Moscow State University, Leninskie Gory 1, Moscow, 119991, Russia}
\email{erezn@inbox.ru}
\address{Department of General Topology and Geometry, Faculty of Mechanics and Mathematics, M.~V.~Lomonosov Moscow State University, Leninskie Gory 1, Moscow, 119991, Russia}
\email{ovsipa@gmail.com}

\keywords{Weak topology, weak$^*$ topology, topological vector space, metrizable group, locally finite-dimensional group, Fr\'echet--Urysohn space, 
$\varkappa$-Fr\'echet--Urysohn space, Baire space, free locally convex space.}

\subjclass[2020]{Primary 46A03, 46A16, 54H11; Secondary 54D55, 54E52, 22A05}

\begin{abstract}
For a normed space $E$, we denote by $E_w$ and $E'_{w^*}$ the space $E$ and its dual $E'$ endowed with the weak 
topology $w$ and the weak$^*$ topology $w^*$, respectively. It is well known that for a Banach space $E$, both 
spaces $E_w$ and $E'_{w^*}$ are Fr\'echet--Urysohn topological spaces if and only if $E$ is finite-dimensional.

This statement remains true for all linear subspaces of $E_w$ and $E'_{w^*}$. Moreover, for any normed space $E$, 
every Fr\'echet--Urysohn subgroup of $E_w$ is locally finite-dimensional; that is, it has an open subgroup 
contained in a finite-dimensional linear subspace. Similarly, for any Banach space $E$, every Fr\'echet--Urysohn 
subgroup of $E'_{w^*}$ is locally finite-dimensional.

Analogous results are obtained for the free locally convex space $L(X)$, its weak version $L_p(X)$, and the free 
topological vector space $V(X)$ over a Tychonoff space~$X$. The same conclusions hold for Baire subgroups and 
Baire linear subspaces of $E_w$, $E'_{w^*}$, $L(X)$, $L_p(X)$, and $V(X)$.

We emphasize that even for metrizable subgroups, these results are new and nontrivial.
In fact, all results are established for the much wider family of $\varkappa$-Fr\'echet--Urysohn subgroups of  topological vector spaces. 
 The class of $\varkappa$-Fr\'echet--Urysohn spaces is the natural framework in which the proofs of our results work.
\end{abstract}

\maketitle

\section{Introduction}\label{Introduction}
The core concept of a metric space was introduced by M. Fr\'echet in 1906, 
the term ``metric space'' was coined by F. Hausdorff in 1914, 
and the modern framework for a metrizable space---a topological space whose topology can be generated by a metric---was formally characterized through metrization theorems by P. S. Urysohn in 1925. 

The topology of a metric space is completely described by convergent sequences.
However, sequences do not suffice to describe the closure of a set in a general topological space.
One of the most natural classes of topological spaces where sequences do suffice
is the class of Fr\'echet--Urysohn spaces.

\begin{itemize}
\item A topological space $X$ is  said to be 
{\em Fr\'{e}chet--Urysohn} if, for any set $A\subset X$ and any point $x$ in the closure $\cl{A}$, there exists a sequence $(x_n)_{n\in \omega} \subset A$
 converging to~$x$. 
\end{itemize}

A topological space $X$ is called a {\em $k$-space} if $A\subset X$ is closed whenever 
$A\cap K$ is closed in $K$ for every compact subset $K\subset X$.
A space $X$ is Fr\'echet--Urysohn if and only if every subspace of $X$ is a $k$-space  \cite{A1}.
The formal concept of a Fr\'echet--Urysohn topological space is due to
A.~V.~Arhangel'skii; see \cite{A1} and~\cite{A2}. 

Every first-countable space, and in particular every metrizable space, is Fr\'echet--Urysohn. 
The larger class of sequential spaces, in which closed sets are determined by convergent sequences, was 
introduced earlier by S.~P.~Franklin \cite{Franklin1965}. The good behavior of spaces whose topology is described 
by convergent sequences is one of the reasons why metrizable spaces play such a prominent role in analysis.

In functional analysis,  the transition from finite-dimensional spaces to 
in\-fi\-nite-dimensional  ones
frequently requires weakening the topology to guarantee compactness or  establish the weak convergence of 
bounded sequences. Note that already in the Hilbert space $\ell_2$, zero belongs to the weak closure of the set 
$\{e_m+m e_n: 1\le m<n\in \omega\}$, but no sequence of elements of this set converges weakly to zero (J.~von 
Neumann~\cite{vonNeumann1930}).

For a normed space $E$, we denote by $E_w$ and $E'_{w^*}$ the space $E$ and its dual $E'$ endowed with the weak topology $w$ and the weak$^*$ topology $w^*$, respectively.

 It is a folklore result  that if $E$ is an infinite-dimensional normed space, 
then $E_w$ is not metrizable \cite[p. 10]{Diestel}. 
If $E$ is an infinite-dimensional Banach space, then neither $E_w$ nor $E'_{w^*}$ is a $k$-space; for an elementary proof of these facts, see \cite[Proposition~6.4]{KM2017}.

There are more publications with similar results for other topological properties. We mention here just a few 
examples. If $E$ is a Banach space, then $E_w$ has the Pytkeev property if and only if $E$ is finite-dimensional 
(\cite[Theorem 1.4]{GKZ2017}). Moreover, it  was observed in \cite{Edgar1984} that $E_w$ is 
not a Baire space for any infinite-dimensional normed space $E$  (recall that a topological 
space $X$ is \emph{Baire} if the Baire category theorem holds for $X$, i.e., the intersection of any countable family of dense open subsets of $X$ is dense in $X$). 

On the other hand, weak topologies have many large convex compact metrizable subspaces; for example, all 
closed balls of the dual of a separable Banach space are metrizable and compact in the weak$^*$ topology. In 
particular, a closed ball of any separable reflexive space is such a space in the weak topology. Recall also that 
any Eberlein compactum, i.e., a weakly compact subset of a Banach space, is Fr\'echet--Urysohn by the 
Eberlein--\v Smulian theorem; see \cite{Pryce}. 

It is worth noting that, in general,  properties of the closed unit ball $B$ of a Banach space $E$ 
endowed with the weak topology can differ dramatically from  those of $E_w$. Indeed, 
the following conditions on a Banach space $E$ are equivalent: 
(a)~$B_w$ is a Fr\'echet--Urysohn space; (b)~$B_w$ is a sequential space; (c)~$B_w$ is a $k$-space; (d)~$E$ 
contains no isomorphic copy of~$\ell_1$ (see \cite[Theorem~5.1]{SW1991}). 
 
In what follows, the abbreviation TVS stands for a topological vector space. 
In this paper, we are interested in the subgroups and linear subspaces of a given TVS. 
Even a countable topological group does not have to be metrizable; for example,  
the group of integers  with the Bohr topology  is nonmetrizable.
By the Birkhoff--Kakutani theorem, a topological group is metrizable  if and only if it is 
first-countable. We note that Malykhin's long-standing question of whether every countable Fr\'echet--Urysohn 
group is metrizable has turned out to be independent of ZFC~\cite{HrusakRamosGarcia2014}.
  
We pose the following natural questions.
\begin{question}
\label{q_main1}
\begin{enumerate}
\item[(1)] Which TVS have the property that every Fr\'echet--Ury\-sohn (or metrizable) linear subspace is finite-dimensional?
\item[(2)] Which TVS have the property that every Baire linear subspace is finite-dimensional?
\end{enumerate}
\end{question}
 We also ask similar questions for {\em subgroups} of TVS.
 
Particular cases of Question \ref{q_main1} were considered in \cite{KL2026}.
For linear subspaces, systematic research has been undertaken very recently in \cite{MR-fu-tvp},
where the classes $\mathscr D_M$ and 
$\mathscr D_{FU}$ of TVS all of whose metrizable (respectively, Fr\'echet--Urysohn) linear 
subspaces are finite-dimensional were introduced. 
It was shown there that the relation between these classes cannot 
be decided in ZFC ($\mathscr D_M=\mathscr D_{FU}$ in the Hru\v{s}\'ak--Ramos-Garc\'{\i}a model, while 
$\mathscr D_M\ne\mathscr D_{FU}$ under $\mathrm{MA}+\neg\mathrm{CH}$) and that $V_w\in \mathscr D_{FU}$ for every 
locally convex space $V$ admitting a continuous norm; the proofs  are measure-theoretic.

In the present paper, 
we use a different, topological-algebraic approach.
In fact, all results are established for the much wider family of $\varkappa$-Fr\'echet--Urysohn subgroups of TVS. 

A topological space $X$ is  said to be
{\em $\varkappa$-Fr\'{e}chet--Urysohn} if, for any {\em open} set $A\subset X$ and any point $x$ in the closure $\cl{A}$, there exists a sequence $(x_n)_{n\in \omega} \subset A$
 converging to~$x$. This notion was introduced by A.~V.~Arhangel'skii in 1999
 and  has found numerous applications in topological algebra and functional analysis 
\cite{G2019-Baire_functions, GOR2026kFUgr, GOR-Baire, GR2026New_classes, KrupskiKucharskiMarciszewski2025, LiL, Osipov, Sakai1, Sakai2}.

For several  important TVS $E$, we establish that
every Fr\'echet--Urysohn subgroup of $E$ is \emph{locally finite-dimensional}, i.e., it has an open subgroup contained in a 
finite-dimensional linear subspace. For subgroups, this is  a natural analogue of finite-dimensionality: 
every discrete subgroup is metrizable and locally finite-dimensional, but it is not generally contained in 
a finite-dimensional linear subspace.

Evidently,  any 
Fr\'echet--Urysohn space is $\varkappa$-Fr\'echet--Urysohn, so all results stated below for 
$\varkappa$-Fr\'echet--Urysohn subgroups apply to metrizable and Fr\'e\-chet--Urysohn subgroups. We also prove the 
same conclusions for Baire subgroups.

 The main results  of the present paper  
can be summarized as follows. 
\begin{enumerate} 
\item[(1)] 
For any normed space $E$, every $\varkappa$-Fr\'echet--Urysohn or Baire 
subgroup of $E_w$ is locally finite-dimensional, and every  $\varkappa$-Fr\'echet--Urysohn or Baire 
linear subspace of $E_w$ is finite-dimensional 
(Theorem~\ref{th:bs-kfu-baire-sgs} and Corollary~\ref{cor:bs-kfu-baire-sgs}). 
\item[(2)] 
For any Banach space 
$E$, every $\varkappa$-Fr\'echet--Urysohn or Baire subgroup of $E'_{w^*}$ is locally finite-dimensional, and 
every  $\varkappa$-Fr\'echet--Urysohn or Baire linear subspace of $E'_{w^*}$ is finite-dimensional (Theorem~\ref{th:weakstar}). The completeness of $E$ is 
essential (Example~\ref{ex:incomplete}). 
\item[(3)] 
 Let $X$ be any Tychonoff space, and let $E(X)$ be the vector space with Hamel basis $X$ endowed with 
the topology of the free locally convex space $L(X)$, of its weak version $L_p(X)$, or of the free TVS $V(X)$, 
with any vector space topology between those of $L_p(X)$ and $L(X)$, or with the weak topology 
$\sigma(E(X),C_b(X))$ generated by the bounded continuous functions on~$X$ (the space $L_b(X)$ in 
Section~\ref{Preliminaries}). Then every 
$\varkappa$-Fr\'echet--Urysohn subgroup of  $E(X)$ is locally finite-dimensional, and every 
 $\varkappa$-Fr\'echet--Urysohn linear subspace of $E(X)$ is 
finite-dimensional (Theorems~\ref{t4} and~\ref{th:VX-kfu}, Corollary~\ref{cor:lbx-kfu}, and Theorem~\ref{th:lss-kfu}). 
Theorem~\ref{th:lss-kfu}  solves Problem~6 of~\cite{KL2026}. 
\item[(4)] 
 Let $X$ be any Tychonoff space, and let $E(X)$ be endowed with any vector space topology finer than that of $L_b(X)$ (in particular, with any 
of the topologies listed in~(3)). Then every
Baire subgroup of  $E(X)$ is locally finite-dimensional, and every  Baire linear subspace of 
$E(X)$ is finite-dimensional  (Theorems~\ref{th:bsg-lfd}  and~\ref{th:B} and Corollary~\ref{cor:B}).
\end{enumerate}

We emphasize that the results concerning subgroups are new and nontrivial even for metrizable subgroups of 
$E_w$ and $E'_{w^*}$.  The statements 
about Fr\'echet--Urysohn and Baire linear subspaces of $E_w$ and $E'_{w^*}$ in  (1) and (2) 
 can be derived, e.g., from 
\cite{KrupskiKucharskiMarciszewski2025, KM2017}, although they have not been formulated in this exact form 
in the literature. The results  in (3) and (4) are entirely 
novel.

The main tool for  obtaining these results is  
Theorem~\ref{th:criterion}, which says that if a TVS 
admits a norm whose balls are closed and precompact and absorb  convergent sequences, then all 
of its $\varkappa$-Fr\'echet--Urysohn and Baire subgroups are locally finite-dimensional. Along the way, we 
characterize locally finite-dimensional subgroups as locally precompact ones (Theorem~\ref{th:lfd-subgroups}).

Results on $\varkappa$-Fr\'echet--Urysohn and Baire subgroups of free topological groups obtained by similar 
methods and  detailed proofs of some auxiliary facts on $\varkappa$-Fr\'echet--Urysohn spaces 
used  in what follows are given  
in the companion paper~\cite{LRS-groups}.

\section{Preliminaries}\label{Preliminaries}

All topological spaces under consideration are assumed to be Tychonoff. 
In what follows, we sometimes identify sequences with the indexed sets of their values. 

As usual, $\R$, $\Q$, and $\mathbb Z$ denote the sets of real, rational, and integer numbers, respectively. 
 The set of nonnegative integers is 
denoted by $\omega$. Given a subset $A$ of a group $G$  (of a TVS $E$), by  $\langle A\rangle$ we denote the smallest subgroup of $G$ (respectively, of the additive 
group of $E$) that contains~$A$. We denote the 
cardinality of a set $X$ by $|X|$ and the closure and interior of a subset $X$ of a topological space by $\cl X$ 
and $\Int X$, respectively.

All TVS are assumed to be over the field $\R$ and Hausdorff. For a TVS $E$, $E'$ denotes 
the space of continuous linear functionals on $E$ and $E_w$ denotes $E$ endowed with the weak topology 
$\sigma(E,E')$. 

We say that a space $Z$ is the \emph{inductive limit} of an increasing sequence $(K_n)_{n\in\omega}$ of its 
subspaces if $Z=\bigcup_{n\in\omega}K_n$ and a set $F\subset Z$ is closed in $Z$ if and only if $F\cap K_n$ is 
closed in $K_n$ for every $n\in\omega$.

The \emph{Ra\u\i kov completion} $\varrho G$ of a topological group $G$ is the completion of $G$ with respect to 
its two-sided uniformity \cite[Section~3.6]{AT}; if $G$ is a dense subgroup of a Ra\u\i kov complete group $H$, 
then $H=\varrho G$. For an abelian group, and in particular for the additive group of a TVS 
$E$, $\varrho E$ is the usual completion~$\widehat E$ of~$E$, which is again a TVS.

Let $X$ be a set.
We denote by $E(X)$ the vector space in which $X$ is a Hamel basis.
Every nonzero $\chi\in E(X)$ can be represented uniquely (up to the order of terms) as 
$\chi=\sum_{i=1}^n\lambda_i x_i$, where  $n\in \omega\setminus\{0\}$, 
$\lambda_i\in \mathbb R\setminus \{0\}$,  $x_i\in X$, 
and $x_i\neq x_j$ for distinct $i,j\leq n$. We define 
\[
\supp\chi=\{x_1,\dots,x_n\},\quad
\|\chi\|=\sum_{i=1}^n |\lambda_i|,\quad\text{and}\quad
\chi(f)=\sum_{i=1}^n \lambda_i f(x_i)
\]
for $f\in \R^X$; we also set $\supp 0=\es$. The function $\norm$ is a norm on $E(X)$, and $\chi(\cdot)$ is 
a linear functional on~$\R^X$.

For $A\subset E(X)$, we set
$$
\supp (A) = \bigcup\{\supp (\chi) : \chi\in A\}.
$$
We also use the notation 
\[
E_n(X)=\{\chi\in E(X): |\supp\chi|\leq n\}, \quad
B_n(X)=\{\chi\in E(X): \|\chi\|\leq n\}
\]
for $n\in \omega$. 
For $A\subset X$, we identify $E(A)$ with the linear subspace of $E(X)$ spanned by $A$; thus, 
$E(A)=\{\chi\in E(X):\supp\chi\subset A\}$, and every finite-dimensional linear subspace of $E(X)$ is 
contained in $E(M)$ for some finite subset $M$ of $X$.

In this paper, we consider the following TVS generated by a Tychonoff space~$X$; 
as vector spaces, all of them coincide with $E(X)$:
\begin{itemize}
\item
$V(X)$, the free TVS of $X$; 
\item
$L(X)$, the free locally convex space of $X$; 
\item
$L_p(X)$, the space $L(X)$ endowed with the weak topology; since the dual of $L(X)$ is $C(X)$, this is 
the topology on $E(X)$ generated by the functionals $\chi\mapsto\chi(f)$, $f\in C(X)$;
\item
$L_b(X)$, the space $E(X)$ with the topology generated by the functionals $\chi\mapsto\chi(f)$, 
$f\in C_b(X)$, where $C_b(X)$ is the space of bounded continuous functions on $X$. 
\end{itemize}

Free  locally convex spaces were introduced by  A.~A.~Markov \cite{Mar, Mar1}.
 Later D.~A.~Raikov constructed the free locally convex space $L(X)$ for a uniform space $X$ \cite{Raikov}.
Free locally convex spaces constitute a very important subclass of locally convex spaces.
It suffices to mention that every locally convex space is a linear continuous quotient of a free locally convex 
space and that every Tychonoff space $X$ embeds as a closed subspace into $L(X)$. Free TVS were apparently 
first introduced by Flood~\cite{Flood}.

 The topology of the free locally convex space $L(X)$ (of the free TVS $V(X)$) 
is defined by the  property  that 
 every continuous map $f$ from $X$ to a locally convex space $E$ (respectively, to a TVS 
$E$) can be extended to a unique continuous linear operator
 $\widehat f\colon L(X) \to E$ (respectively, $\widehat f\colon V(X)\to E$).  

The free locally convex space $L(X)$ and the free TVS $V(X)$ always exist and are unique.
The topology of $V(X)$ is finer than the topology of $L(X)$, which is finer than the topology of $L_p(X)$, which, 
in turn, is finer than the topology of $L_b(X)$. Note that since every $f\in C_b(X)$ extends to a continuous 
function on the Stone--\v{C}ech compactification $\beta X$, it follows that $L_b(X)$ is a linear subspace of $L_p(\beta X)$ with 
the induced topology.

 The spaces $L(X)$ and $V(X)$ are Baire if and only if $X$ is finite.
The same is true for metrizability:  $L(X)$ and $V(X)$ are metrizable if and only if $X$ is finite.
For more details about the topological properties of $V(X)$ and $L(X)$ we refer the reader to \cite{GM2017ftvs} 
and \cite{Uspenskii}, respectively.
An intrinsic description of the topologies of both $V(X)$ and $L(X)$ is given in~\cite{BL2018}.

Recall that a family $\{M_\al:\al\in A\}$ of subsets of a space $X$ is \emph{discrete} if 
any point $x\in X$ has a neighborhood $U$ such that $|\{\al\in A:M_\al\cap U\neq\es\}|\leq 1$.

A subset $A$ of a space $X$ is said to be \emph{functionally bounded} if its image $f(A)$
under any continuous function $f\colon X\to \R$ is bounded in $\R$. 
It is easy to show (e.g., by analogy with \cite[Theorem 3.10.21]{Engelking}) 
that a set $A\subset X$ is functionally bounded if and only if, 
for any countable discrete family $\{V_n: n\in\om\}$ of open subsets of $X$, 
the set $\{n\in\om: V_n\cap A\neq \es\}$ is finite. Clearly, any compact subset of $X$ is functionally bounded.

A subset $A$ of a topological group $G$ is called \emph{precompact} or \emph{totally bounded} in $G$ 
if, for every neighborhood $O$ of the identity element of $G$, one can find a finite set $M\subset G$ 
such that $A\subset MO$. Any functionally bounded subset of a topological group is precompact 
\cite[Proposition 6.10.2]{AT}.
A subset $A$ of a topological group $G$ is precompact in $G$ if and only if the closure of $A$ 
in the Ra\u\i kov completion of $G$ is compact \cite[Theorem 3.7.10]{AT}. 
Since the Ra\u\i kov completion of a subgroup $H$ of $G$ is the closure of $H$ in $\varrho G$, it follows 
that \emph{a subset of a subgroup $H$ of $G$ is precompact in $H$ if and only if it is precompact in~$G$}.

A subset $A$ of a TVS $E$ is \emph{bounded} if, for every neighborhood $O$ of 
zero in $E$, one can find an  $n\in \omega$ such that $A\subset n O$.
Every precompact subset of a TVS is bounded~\cite[Lemma 1.5.5]{BSS2017tvs}. Therefore, 
every functionally bounded subset of a TVS is bounded.

A subset $A$ of a TVS $E$ is \emph{weakly bounded} if, for every
continuous linear functional $\varphi\in E'$, the set $\varphi(A)$ is bounded in $\R$, i.e., if $A$ is 
bounded in~$E_w$.
Clearly, any bounded set is weakly bounded. 
If $E$ is a locally convex space, 
then $A\subset E$ is bounded if and only if $A$ is weakly bounded \cite[Theorem~1.11.15]{BSS2017tvs}. 
Moreover, \emph{a set $A\subset E$ is weakly bounded if and only if it is precompact in $E_w$}. Indeed, 
the map $x\mapsto (\varphi(x))_{\varphi\in E'}$ is a topological isomorphism of $E_w$ onto a subgroup of 
 the topological product $\R^{E'}$, and a subset of  $\R^{E'}$ is precompact if and 
only if all of its projections  on the factors are bounded.

A topological group $G$ is said to be \emph{locally precompact} if the identity element of $G$ has a 
precompact neighborhood. A topological group is locally precompact if and only if its 
Ra\u\i kov completion is locally compact (see \cite[3.7.I]{AT}). 

We need the following modification of the $\Delta$-system lemma; in fact, we use it only 
for countable families of sets.

\begin{proposition}[{see \cite[Problems~1 and~3 on pp.~107, 421,~422]{Komjath-Totik}}]
\label{prop:Delta-system}
Suppose given a set $X$, a positive integer $n$, and an infinite family  $\A$ of subsets of $X$ 
such that $|M|=n$ for all $M\in \A$ and the cardinal $\gamma=|\A|$ is regular.
Then there exists an $R\subset X$ and a $\B\subset \A$ such that $|\B|=\gamma$ and $A\cap B=R$ for any 
distinct $A,B\in \B$.
\end{proposition}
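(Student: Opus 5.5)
The plan is the standard $\Delta$-system argument: first pass to a countable subfamily of $\A$, then extract the kernel by repeated pigeonholing, and finally use regularity of $\gamma$ to handle uncountable families.

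\emph{Countable case ($\gamma=\omega$), by induction on $n$.} For $n=1$ the sets are distinct singletons. Take $R=\es$ and $\B=\A$.

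For the inductive step, suppose the statement holds for $n-1$ and consider the points of $X$.

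\begin{itemize}
\item If some point $x$ lies in infinitely many members of $\A$, apply the inductive hypothesis to the family $\{M\setminus\{x\}: x\in M\in\A\}$. This is a countably infinite family of $(n-1)$-element sets, since removing $x$ keeps distinct sets distinct. It yields a kernel $R'$ and a subfamily, and we set $R=R'\cup\{x\}$.
\item Otherwise every point lies in only finitely many members. Then build $\B=\{M_k\}$ greedily: at each stage the finitely many points of $M_0\cup\dots\cup M_{k-1}$ meet only finitely many members of $\A$. Since $\A$ is infinite, we can choose $M_k$ disjoint from all previous $M_i$, and then $R=\es$.
\end{itemize}

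\emph{Uncountable regular $\gamma$, by induction on $n$.} We use the same dichotomy.

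\begin{itemize}
\item If some $x$ lies in $\gamma$ many members, remove $x$ and recurse exactly as in the countable case.
\item Otherwise every point lies in fewer than $\gamma$ members. Build $\B=\{M_\xi:\xi<\gamma\}$ by transfinite recursion. At stage $\xi<\gamma$, the set $S=\bigcup_{\eta<\xi}M_\eta$ has cardinality $<\gamma$ when $\gamma$ is uncountable, because $|S|\le n\cdot|\xi|$. Each point of $S$ meets $<\gamma$ members, so by regularity of $\gamma$ the members meeting $S$ number $<\gamma$. Hence some $M_\xi\in\A$ is disjoint from $S$, and we take $R=\es$.
\end{itemize}

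The only delicate point is the regularity step in the uncountable case: it guarantees that a union of fewer than $\gamma$ sets, each of size less than $\gamma$, still has size less than $\gamma$. In the countable case this step reduces to the trivial fact that a finite union of finite sets is finite. Since the paper uses only countable families, the countable argument above is the essential content.
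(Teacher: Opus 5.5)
Your proof is correct. The paper gives no proof of this proposition; it only cites Problems~1 and~3 of Komj\'ath--Totik. Your argument is the classical one:
\begin{enumerate}
\item[(1)] induct on $n$, splitting on whether some point lies in $\gamma$ many members;
\item[(2)] in the first case, remove that point (the map $M\mapsto M\setminus\{x\}$ is injective) and apply the induction hypothesis;
\item[(3)] otherwise, build a pairwise disjoint subfamily of size $\gamma$ by transfinite recursion, using regularity to see that the fewer than $\gamma$ points used so far meet fewer than $\gamma$ members.
\end{enumerate}

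Two cosmetic remarks. The opening sentence of your plan (``first pass to a countable subfamily'') does not describe what you actually do, and it could not work for uncountable $\gamma$, so drop it. Also, the transfinite recursion works verbatim for $\gamma=\omega$, since at finite stages the set $S$ is finite. Your separate countable case is therefore subsumed by the general argument.
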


\begin{corollary}
\label{cor:Delta-system}
Given a set $X$, a positive integer  $n$, and an infinite family $\A$ of subsets of $X$ 
such that $|M|\leq n$ for all $M\in \A$, 
there exists an $R\subset X$ and an infinite family $\B\subset \A$ of sets of the same 
cardinality such that $A\cap B=R$ for any distinct $A,B\in \B$.
\end{corollary}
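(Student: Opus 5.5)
We derive Corollary~\ref{cor:Delta-system} from Proposition~\ref{prop:Delta-system} in two reductions: first make all sets the same size, then pass to a subfamily of regular cardinality.

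\emph{Equal sizes.} Write $\A=\bigcup_{k=0}^{n}\A_k$, where $\A_k=\{M\in\A:|M|=k\}$. This is a finite union and $\A$ is infinite, so some $\A_k$ is infinite. Moreover, for infinite $\A$ we may choose $k$ with $|\A_k|=|\A|$, since an infinite cardinal equals the maximum of finitely many cardinals whose union it is. Note $k\ge1$, because $\A_0\subset\{\es\}$ has at most one element.

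\emph{Regular cardinality.} If $|\A_k|$ is regular, apply Proposition~\ref{prop:Delta-system} to $\A_k$ with $n$ replaced by $k$. This yields $R$ and $\B\subset\A_k\subset\A$ with $|\B|=|\A_k|=|\A|$ and $A\cap B=R$ for distinct $A,B\in\B$. If $|\A_k|$ is singular, we cannot demand full cardinality, since the statement only asks for an infinite $\B$. In that case choose a countably infinite subfamily $\A'\subset\A_k$. Since $\aleph_0$ is regular, Proposition~\ref{prop:Delta-system} applied to $\A'$ gives a countably infinite $\B$ with the required $\Delta$-system property.

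\emph{Reading of ``of the same cardinality''.} I read this phrase as saying that $\B$ is an infinite family whose members all have the same cardinality. Since $\B\subset\A_k$, every member of $\B$ has exactly $k$ elements, so this holds in either case. If instead the phrase is meant as $|\B|=|\A|$, the first case gives it directly when $|\A_k|$ is regular. The singular case cannot always deliver it.

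\emph{Main obstacle.} The only real difficulty is this interpretive point about singular cardinalities. For the paper's purposes, which use only countable families, the countable subfamily argument suffices in all cases.
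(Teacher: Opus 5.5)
Your proof is correct and is essentially the paper's argument: the paper simply takes a countably infinite subfamily $\A'\subset\A$ of sets of a common size $m\le n$ and applies Proposition~\ref{prop:Delta-system} with $\gamma=\aleph_0$, so your regular-cardinality branch is harmless but unnecessary. Your reading of ``of the same cardinality'' (all members of $\B$ have the same size) is the intended one, as Remark~\ref{rem:delta} relies on exactly this.
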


\begin{proof}
There exists an $m\leq n$ and a countably infinite set $\A'\subset\A$ such that $|A|=m$ for $A\in\A'$. 
Applying Proposition~\ref{prop:Delta-system} to $\A'$, we obtain the required family.
\end{proof}

\begin{remark}
\label{rem:delta}
In Corollary~\ref{cor:Delta-system}, we have $A\setminus R\neq\es$ for all $A\in\B$. 
Indeed, if $A=R$ for some $A\in \B$, then $R\subsetneq B$ for any other $B\in \B$, and hence $|B|>|A|$.
\end{remark}

In the  rest of this section
we collect several auxiliary facts about  $\varkappa$-Fr\'echet--Urysohn spaces and  
families absorbing convergent sequences; their complete proofs are provided in 
the companion paper~\cite{LRS-groups}.
For the reader's convenience,  we include  sketches of the proofs.

A topological space $X$ is \emph{strongly $\varkappa$-Fr\'{e}chet--Urysohn} if, given any $x\in X$ 
and any decreasing sequence $(U_n)_{n\in \omega}$ of open subsets of $X$ for which 
$x\in \bigcap_{n\in \omega} \cl U_n$, there exists a sequence $(x_n)_{n \in\omega}$ converging to $x$ 
such that $x_n \in  U_n$ for all $n \in \omega$. This is the $\varkappa$-analogue of the strong 
Fr\'echet--Urysohn property~\cite{Siwiec}, and the following theorem is  an analogue of P.~Nyikos' 
theorem~\cite{Nyikos1981}.

\begin{theorem}[{\cite[Theorem~2.1]{GOR2026kFUgr}}]
\label{t1}
Any $\varkappa$-Fr\'{e}chet--Urysohn topological group is strongly $\varkappa$-Fr\'{e}chet--Urysohn.
\end{theorem}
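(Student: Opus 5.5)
Let $G$ be a $\varkappa$-Fr\'echet--Urysohn group, $x\in G$, and $(U_n)_{n\in\omega}$ a decreasing sequence of open sets with $x\in\bigcap_n\cl{U_n}$. Translating by $x^{-1}$, we may assume $x=e$. The plan is to imitate Nyikos' proof that Fr\'echet--Urysohn groups are strongly Fr\'echet--Urysohn, using the group structure to merge the countably many sets $U_n$ into a single open set to which the $\varkappa$-Fr\'echet--Urysohn property applies. The merged set must have $e$ in its closure, and any sequence in it converging to $e$ must give points $x_n\in U_n$ with $x_n\to e$.

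The natural first try is to place copies of $U_n$ near a point approaching $e$. Since we do not yet have such a point, we use the sets themselves. Take a point $g_n\in U_n$ and form open sets like $U_n\cdot g_n^{-1}\cdots$. A cleaner choice is this: if $e\in\Int\cl{U_n}$ for some $n$, or $e\in U_n$ for all $n$, the claim is easy, since in the latter case the constant sequence works. Otherwise we look for a sequence $(a_n)$ with $a_n\to e$ and $a_n\notin\cl{\{a_m:m>n\}}$, approximating with open sets. We set
\[
W=\bigcup_{n}\bigl(U_n\cap a_nV_n\bigr),
\]
where the $V_n$ are neighborhoods of $e$ chosen so that the family $\{a_nV_n\}$ is locally finite away from $e$. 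Then $e\in\cl W$, because each $a_nV_n\cap U_n\neq\es$ near $e$ and $a_n\to e$.

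Applying the $\varkappa$-Fr\'echet--Urysohn property to $W$ yields a sequence $(y_k)\subset W$ with $y_k\to e$. By local finiteness away from $e$, no infinitely many $y_k$ can lie in one piece $a_nV_n$ unless they converge into $\cl{a_nV_n}\not\ni e$. So the indices $n(k)$ of the pieces containing $y_k$ tend to infinity. Since the $U_n$ decrease, $y_k\in U_{n(k)}\subset U_m$ whenever $n(k)\ge m$. We then define $x_m=y_k$ for the least $k$ with $n(k)\ge m$, which gives $x_m\in U_m$ and $x_m\to e$.

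The main obstacle is producing the points $a_n\to e$ with $a_n\in\cl{U_n}$ and separating neighborhoods. A priori we cannot get a convergent sequence from $e\in\bigcap\cl{U_n}$, since $U_n$ varies. I would construct it in two stages. First, apply the $\varkappa$-Fr\'echet--Urysohn property inside each $U_n$ separately to get sequences $s^n_k\to e$ with $s^n_k\in U_n$. Second, use a diagonal argument in the group: the open set $\bigcup_n U_n\cdot(s^n_n)^{-1}\cdots$ should be replaced by the open set $O=\bigcup_{n,k}(s^n_k)^{-1}(U_n\cap s^n_kV_{n,k})$ built by translating, and we apply the property once more to a suitable open set whose closure contains $e$. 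This is the role the group translations play, replacing Nyikos' use of products of sequences. If direct diagonalization fails, I would instead follow the known argument of \cite{GOR2026kFUgr} verbatim, treating the nontrivial case where $e\notin\cl{\bigcup_n(U_n\setminus \cl{U_{n+1}})}$ by passing to the differences.
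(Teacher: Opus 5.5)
\textbf{There is a genuine gap: the diagonalization, which is the whole content of the theorem, is never carried out.} (The paper gives no proof of its own here; it cites \cite{GOR2026kFUgr}.)

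Your third and fourth paragraphs are a correct reduction. Given $a_n\in\cl{U_n}\setminus\{e\}$ with $a_n\to e$, choose $V_n$ with $e\notin a_n\cl{V_n}$; local finiteness is never used. Applying the $\varkappa$-Fr\'echet--Urysohn property to $W$ then produces $x_n\in U_n$ with $x_n\to e$.

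But such a sequence $(a_n)$ is essentially what the theorem itself provides. If $e\notin U_n$, any $x_n\in U_n$ with $x_n\to e$ is one. So the reduction only replaces $U_n$ by $\cl{U_n}$ and leaves the diagonalization open, and that is exactly the step you do not do.

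The set you propose for that step fails. We have $(s^n_k)^{-1}(U_n\cap s^n_kV_{n,k})=(s^n_k)^{-1}U_n\cap V_{n,k}$, and this contains $e$ because $s^n_k\in U_n$. So $O$ is an open neighborhood of $e$, and applying the $\varkappa$-Fr\'echet--Urysohn property to it yields only trivial sequences. Deferring to ``the known argument of \cite{GOR2026kFUgr}'' is not a proof. Also, your ``easy'' case $e\in\Int\cl{U_n}$ is not easy, since it says nothing about $U_m$ for $m>n$.

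The missing idea is the one you set aside in your second paragraph. You \emph{do} have points approaching $e$, unrelated to the $U_n$. You should translate copies of $U_n$ to those points rather than intersect $U_n$ with their neighborhoods. Here is how the argument goes.

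\begin{itemize}
\item If $G$ is discrete, then $e\in U_n$ for all $n$ and the constant sequence works.
\item Otherwise, apply the $\varkappa$-Fr\'echet--Urysohn property to the open set $G\setminus\{e\}$ to get $g_n\to e$ with $g_n\neq e$.
\item Choose open $V_n\ni e$ with $g_n^{-1}\notin\cl{V_n}$, and set $W=\bigcup_n g_n(U_n\cap V_n)$, which is open.
\item Since $e\in\cl{U_n\cap V_n}$, each $g_n$ lies in $\cl W$, so $e\in\cl W$.
\item Take $y_k\in W$ with $y_k\to e$, and write $y_k=g_{n_k}u_k$ with $u_k\in U_{n_k}\cap V_{n_k}$.
\item Suppose $n_k=n$ for infinitely many $k$. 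Along those $k$, the points $u_k=g_n^{-1}y_k\in V_n$ converge to $g_n^{-1}$, forcing $g_n^{-1}\in\cl{V_n}$, a contradiction.
\item Hence $n_k\to\infty$, so $g_{n_k}\to e$ and $u_k=g_{n_k}^{-1}y_k\to e$.
\end{itemize}

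Your own re-indexing step then finishes: using that the $U_n$ decrease, it gives $x_m\in U_m$ with $x_m\to e$.
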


\begin{proposition}[{\cite{LRS-groups}}]
\label{p1}
If $X$ is strongly $\varkappa$-Fr\'{e}chet--Urysohn, $x\in X$, and $(U_n)_{n\in\omega}$ is a \textup(not 
necessarily decreasing\textup) sequence of open subsets of $X$ for which $x\in \bigcap_{n\in \omega} \cl U_n$, 
then there exist a sequence $(x_k)_{k\in\omega}$ converging to $x$ and a strictly increasing sequence 
$(n_k)_{k\in\omega}$ of nonnegative integers such that $x_k\in U_{n_k}$ for all $k\in\omega$.
\end{proposition}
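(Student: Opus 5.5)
The idea is to reduce to the decreasing case, where Theorem~\ref{t1}'s defining property (strong $\varkappa$-Fr\'echet--Urysohn) applies directly. Put $V_m=\bigcup_{n\ge m}U_n$ for $m\in\omega$. Each $V_m$ is open, the sequence $(V_m)$ is decreasing, and $x\in\cl U_m\subset\cl V_m$ for every $m$. So $x\in\bigcap_m\cl V_m$, and strong $\varkappa$-Fr\'echet--Urysohnness gives a sequence $(y_m)_{m\in\omega}$ converging to $x$ with $y_m\in V_m$. For each $m$ we then choose an index $j_m\ge m$ with $y_m\in U_{j_m}$.

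Next we extract a subsequence along which the indices strictly increase. Since $j_m\ge m$, we have $j_m\to\infty$, so we can choose $m_0<m_1<\cdots$ inductively with $j_{m_{k+1}}>j_{m_k}$. Concretely, given $m_k$, pick $m_{k+1}>m_k$ with $m_{k+1}>j_{m_k}$; then $j_{m_{k+1}}\ge m_{k+1}>j_{m_k}$. Now set $x_k=y_{m_k}$ and $n_k=j_{m_k}$. Then $(n_k)$ is strictly increasing and $x_k\in U_{n_k}$. Moreover $(x_k)$ converges to $x$, being a subsequence of a convergent sequence.

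There is no real obstacle here. The one point to check is that the tails $V_m$ are open and contain $x$ in their closures, and both are immediate. No Hausdorff or group structure is needed beyond the strong $\varkappa$-Fr\'echet--Urysohn hypothesis.
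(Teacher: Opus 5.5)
Your proof is correct and essentially identical to the paper's: the paper also passes to the decreasing open tails $W_n=\bigcup_{k\ge n}U_k$, takes $y_n\in U_{m_n}$ with $m_n\ge n$, and extracts a subsequence along which the indices strictly increase. Your explicit inductive choice $m_{k+1}>j_{m_k}$ just spells out the extraction step that the paper leaves implicit.
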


\begin{proof}
Let $W_n=\bigcup_{k\ge n} U_k$. The sequence of open sets $(W_n)_{n\in\omega}$ is decreasing, and $x\in\cl W_n$. Hence there is a sequence 
$(y_n)_{n\in\omega}$ converging to $x$  such that $y_n \in W_n$, i.e., $y_n\in U_{m_n}$ for some $m_n\geq n$. Choose $j_0<j_1<\dots$ so that 
$m_{j_0}<m_{j_1}<\dots$\,. Then $x_k=y_{j_k}$ and $n_k=m_{j_k}$  are as required.
\end{proof}

\begin{definition}
\label{def-absorb}
We say that a family $\mathscr F$ of subsets of $X$ \emph{absorbs convergent sequences} in $X$ if, for 
every convergent sequence $S=(x_n)_{n\in \omega}$ in $X$, there exists an 
$F\in \mathscr F$ such that $S \subset F$.
\end{definition}

\begin{remark}
\label{rem:finer}
If a family $\mathscr F$ absorbs convergent sequences in $(X,\tau)$, then it absorbs convergent sequences in $(X,\tau')$ for any 
topology $\tau'$ finer than~$\tau$ and $\{F\cap Y:F\in\mathscr F\}$ absorbs convergent sequences in any $Y\subset X$.
\end{remark}

\begin{lemma}[{\cite{LRS-groups}}]
\label{lemma:ind-limit}
If a space $Z$ is the inductive limit of an increasing sequence $(K_n)_{n\in\omega}$ of its closed subspaces, 
then the family $\{K_n: n\in\omega\}$ absorbs convergent sequences in $Z$.
\end{lemma}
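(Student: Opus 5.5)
We argue by contradiction. Let $S=(x_m)_{m\in\omega}$ be a sequence converging in $Z$ to a point $x$, and suppose that $S$ is contained in no $K_n$. Note first that the set $\{x\}\cup S$ is contained in some union of finitely many $K_n$'s only if it lies in a single $K_n$, because the sequence $(K_n)$ is increasing. Hence, after passing to a subsequence, we may assume that $x_{m}\notin K_{n_m}$ for a strictly increasing sequence $(n_m)_{m\in\omega}$ with $x\in K_{n_0}$. By further thinning, we may also assume that each $x_m$ lies in some $K_{n_{m+1}}$ (that is, we choose $n_{m+1}$ large enough that $x_m\in K_{n_{m+1}}$), and that $x_m\ne x$ for all $m$.

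Next we consider the set $F=\{x_m: m\in\omega\}$, which does not contain $x$, and verify that $F\cap K_n$ is closed in $K_n$ for every $n$. Since $x_m\notin K_{n_m}$ and $(K_n)$ is increasing, we have $x_m\notin K_n$ whenever $n\le n_m$. So for a fixed $n$, only those $x_m$ with $n_m<n$ can lie in $K_n$, and there are finitely many of them. A finite set is closed in the Hausdorff (indeed Tychonoff) space $K_n$, so $F\cap K_n$ is closed in $K_n$.

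By the definition of inductive limit, $F$ is then closed in $Z$. But $x_m\to x$ with $x_m\in F$, so $x\in\cl F=F$, contradicting $x\notin F$. Therefore $S\subset K_n$ for some $n$. (If we want the limit point included as well, we note that $x\in K_{n_0}$ and enlarge $n$.) This shows that $\{K_n:n\in\omega\}$ absorbs convergent sequences. The hypothesis that the $K_n$ are closed is not really needed in this argument; the only delicate step is the bookkeeping in the reduction to a subsequence escaping $K_{n_m}$, and that step is routine.
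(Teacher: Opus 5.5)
Your proof is correct and is essentially the paper's argument. The paper also extracts a subsequence $x_{k_j}\notin K_j$ (you use a strictly increasing $n_j$) and keeps only the terms beyond the index $m$ with $x\in K_m$, so that the limit lies outside the resulting set $A$. It then notes that $A\cap K_n$ is finite, hence closed, for every $n$, so $A$ is closed in $Z$, which contradicts $x\in\cl A\setminus A$.

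Your extra thinning that makes $x_m\in K_{n_{m+1}}$ is harmless but unnecessary. You are right that the closedness of the $K_n$ is never used.
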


\begin{proof}
Suppose that $x_k\to x$ but $\{x_k:k\in\omega\}$ is contained in no $K_n$. Since every finite set is contained in 
some $K_n$, we can choose $k_0<k_1<\dots$ so that $x_{k_j}\notin K_j$. Let $x\in K_m$ and 
$A=\{x_{k_j}:j\geq m\}$. Then $x\in \cl A\setminus A$. On the other hand, $A\cap K_n$ 
is finite for each $n$, so that $A$ is 
closed in $Z$. This contradiction proves the lemma.
\end{proof}

\begin{proposition}[{\cite{LRS-groups}}]
\label{prop:kfu-group-as}
Suppose that an increasing family $\{F_n: n\in \omega\}$ of closed subsets of a 
$\varkappa$-Fr\'{e}chet--Urysohn topological group $G$ absorbs convergent sequences in~$G$. 
Then $F_n$ is a neighborhood of the identity element $e$ of $G$ for some $n \in \omega$.
\end{proposition}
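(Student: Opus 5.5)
We argue by contradiction. Suppose that no $F_n$ is a neighborhood of $e$. Since each $F_n$ is closed, its complement $U_n=G\setminus F_n$ is open. Moreover, $e\in\cl{U_n}$ for every $n$: otherwise some neighborhood of $e$ misses $U_n$, so it lies in $F_n$, and $F_n$ would be a neighborhood of $e$. Because the $F_n$ increase, the open sets $U_n$ form a decreasing sequence with $e\in\bigcap_{n\in\omega}\cl{U_n}$.

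By Theorem~\ref{t1}, the group $G$ is strongly $\varkappa$-Fr\'echet--Urysohn. Applying this property to the decreasing sequence $(U_n)_{n\in\omega}$ and the point $e$, we obtain a sequence $(x_n)_{n\in\omega}$ converging to $e$ with $x_n\in U_n$, that is, $x_n\notin F_n$, for all $n$. The set $S=\{e\}\cup\{x_n:n\in\omega\}$ is a convergent sequence, so by the absorption hypothesis there is an $m$ with $S\subset F_m$. In particular $x_m\in F_m$, which contradicts $x_m\in U_m$. Hence some $F_n$ is a neighborhood of~$e$.

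The only nontrivial ingredient is the passage from $\varkappa$-Fr\'echet--Urysohn to strongly $\varkappa$-Fr\'echet--Urysohn, which is exactly what Theorem~\ref{t1} supplies for topological groups. The rest is bookkeeping. Monotonicity of the $F_n$ is what makes the $U_n$ decrease, so that the strong property applies directly. Without monotonicity one could instead use Proposition~\ref{p1} to get $x_k\in U_{n_k}$ with $n_k$ strictly increasing. In that version, the absorbing $F_m$ contains all $x_k$, and choosing $k$ with $n_k\ge m$ gives $x_k\in F_m\subset F_{n_k}$, again a contradiction; this step also uses monotonicity, so the monotone hypothesis is needed either way.
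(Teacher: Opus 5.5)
Your proposal is correct and follows the paper's proof exactly: complements $G\setminus F_n$ are decreasing open sets with $e$ in each closure, Theorem~\ref{t1} gives $x_n\to e$ with $x_n\notin F_n$, and that sequence cannot lie in any $F_m$. Your closing remark on the non-monotone variant via Proposition~\ref{p1} is not needed for this statement.
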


\begin{proof}
Suppose not. Then $e\in \cl{G\setminus F_n}$ for all $n\in\omega$, and the sequence of open sets $(G\setminus F_n)_{n\in\omega}$ is decreasing. By 
Theorem~\ref{t1} there exists a sequence $(x_n)_{n\in\omega}$ converging to $e$ with $x_n\notin F_n$. 
This sequence is contained in no $F_n$, which is
a contradiction.
\end{proof}

\begin{proposition}[{\cite{LRS-groups}}]
\label{prop:kfu-group-loc-precomp}
Suppose that a family $\{F_n: n\in \omega\}$ of closed precompact subsets of a topological group $G$ absorbs convergent 
sequences in~$G$. Then any $\varkappa$-Fr\'{e}chet--Urysohn subgroup of $G$ is locally precompact. 
\end{proposition}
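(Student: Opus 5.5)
Let $H$ be a $\varkappa$-Fr\'echet--Urysohn subgroup of $G$. We want to apply Proposition~\ref{prop:kfu-group-as} inside $H$. That proposition needs an \emph{increasing} family, but the given family $\{F_n\}$ need not be increasing, and finite unions are not directly useful either. So the plan is first to replace $\{F_n\}$ by an increasing family that still consists of closed precompact sets and still absorbs convergent sequences. For this we set
\[
F'_n = F_0\cup F_1\cup\dots\cup F_n .
\]
Each $F'_n$ is closed, being a finite union of closed sets. It is precompact, since a finite union of precompact sets is precompact: for a neighborhood $O$ of $e$, take the union of the finite sets $M_i$ with $F_i\subset M_iO$. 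The family $\{F'_n\}$ is increasing, and it absorbs convergent sequences because $F_n\subset F'_n$.

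Next we restrict the family to $H$ using Remark~\ref{rem:finer}. The traces $F'_n\cap H$ form an increasing family of closed subsets of $H$ that absorbs convergent sequences in $H$. Since $H$ is a $\varkappa$-Fr\'echet--Urysohn topological group, Proposition~\ref{prop:kfu-group-as} yields an $n$ such that $F'_n\cap H$ is a neighborhood of $e$ in $H$.

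Finally, $F'_n\cap H\subset F'_n$ is precompact in $G$, because a subset of a precompact set is precompact. By the fact recorded in the preliminaries, a subset of $H$ is precompact in $H$ if and only if it is precompact in $G$. Hence $F'_n\cap H$ is a precompact neighborhood of $e$ in $H$, and so $H$ is locally precompact.

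None of the steps is a serious obstacle. The only point needing care is the passage to the increasing family $\{F'_n\}$, with the check that closedness, precompactness and absorption are all preserved. The remaining steps are direct applications of Remark~\ref{rem:finer}, Proposition~\ref{prop:kfu-group-as}, and the equivalence of precompactness in $H$ and in $G$.
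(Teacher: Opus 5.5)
Your proof is correct and is essentially the paper's own argument: the paper likewise takes $P_n=(F_0\cup\dots\cup F_n)\cap H$. It notes that these sets are closed in $H$, precompact in $G$ (hence in $H$), increasing, and absorbing convergent sequences in $H$, and then applies Proposition~\ref{prop:kfu-group-as}. Your aside that finite unions are ``not directly useful'' contradicts your own construction and can simply be dropped.
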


\begin{proof}
Let $H$ be a $\varkappa$-Fr\'{e}chet--Urysohn subgroup of $G$. The sets $P_n=(F_0\cup\dots\cup F_n)\cap H$ are closed in $H$ and precompact in $G$ (and hence in $H$).
Also, the family $\{P_n: n\in\omega\}$ is increasing and absorbs convergent sequences in $H$. 
By Proposition~\ref{prop:kfu-group-as} one of the sets $P_n$ is a neighborhood of the identity in~$H$.
\end{proof}

Recall that a space $Z$ is \emph{hemicompact} if there are compact sets $K_n\subset Z$, $n\in\omega$, such that 
every compact subset of $Z$ is contained in some $K_n$; we then say that $Z$ is hemicompact \emph{with respect to} 
$\{K_n:n\in\omega\}$. Replacing $K_n$ by $K_0\cup\dots\cup K_n$, we may assume that the sequence of sets $K_n$ is increasing.

\begin{corollary}
\label{cor:hemicompact-group}
Let $G$ be a topological group which is hemicompact with respect to a family $\{K_n:n\in\omega\}$. Then every 
$\varkappa$-Fr\'echet--Urysohn subgroup $H$ of $G$ is locally precompact. More precisely, $K_n\cap H$ is a 
neighborhood of the identity in $H$ for some $n$, provided that the sequence of sets $K_n$ is increasing.
\end{corollary}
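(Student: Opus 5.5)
The plan is to deduce the corollary from Propositions~\ref{prop:kfu-group-as} and~\ref{prop:kfu-group-loc-precomp}. We take the family $\{K_n:n\in\omega\}$ to be increasing; the hemicompactness assumption is unchanged if we replace $K_n$ by $K_0\cup\dots\cup K_n$. The first step is to check the hypotheses of those propositions for the family $\{K_n\}$ in $G$. Each $K_n$ is compact, hence closed because $G$ is Tychonoff (Hausdorff). It is also precompact, since a compact set is functionally bounded and functionally bounded subsets of a topological group are precompact.

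The second step is to show that $\{K_n\}$ absorbs convergent sequences in $G$. If $x_k\to x$ in $G$, then $S=\{x\}\cup\{x_k:k\in\omega\}$ is compact. By hemicompactness, $S\subset K_n$ for some $n$, so the sequence $(x_k)$ is contained in $K_n$.

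At this point Proposition~\ref{prop:kfu-group-loc-precomp} already gives that every $\varkappa$-Fr\'echet--Urysohn subgroup $H$ of $G$ is locally precompact. For the more precise statement, set $P_n=K_n\cap H$. These sets are closed in $H$ and increasing. By Remark~\ref{rem:finer}, the family $\{P_n\}$ absorbs convergent sequences in $H$; note that a sequence converging in $H$ also converges in $G$. Proposition~\ref{prop:kfu-group-as}, applied to the $\varkappa$-Fr\'echet--Urysohn group $H$, then shows that some $P_n=K_n\cap H$ is a neighborhood of the identity in $H$. Since $K_n\cap H$ is precompact in $G$, and hence in $H$, this also reproves local precompactness directly.

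There is no real obstacle in this argument. The only point to watch is that the increasing assumption is needed for Proposition~\ref{prop:kfu-group-as}, and it is guaranteed by the union replacement above.
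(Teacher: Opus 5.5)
Your proposal is correct and follows essentially the same route as the paper. The paper likewise notes that compact sets are closed and precompact and that hemicompactness makes $\{K_n\}$ absorb convergent sequences. It then applies Proposition~\ref{prop:kfu-group-loc-precomp}, and for the sharper claim applies Proposition~\ref{prop:kfu-group-as} to the increasing family $\{K_n\cap H\}$ via Remark~\ref{rem:finer}.
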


\begin{proof}
Every compact subset of a topological group is closed and precompact. If a sequence $(x_k)_{k\in\omega}$ converges to $x$ 
in $G$, then $\{x\}\cup\{x_k: k\in\omega\}$ is compact and hence contained in some $K_n$; thus, the family 
$\{K_n: n\in\omega\}$ absorbs convergent sequences in $G$. It remains to apply Proposition~\ref{prop:kfu-group-loc-precomp} (and, for 
the second assertion, Proposition~\ref{prop:kfu-group-as} to the increasing family $\{K_n\cap H: n\in\omega\}$, which 
absorbs convergent sequences in $H$ by Remark~\ref{rem:finer}).
\end{proof}

\section{Locally finite-dimensional subgroups\\ of topological vector spaces}
\label{section3}

\begin{definition}
\label{lfd}
Let $V$ be a TVS. We say that its subgroup $G$ is \emph{locally finite-dimensional} if there exists 
a finite-dimensional subspace $E$ of $V$ for which $G\cap E$ is open in~$G$. 
\end{definition}

Equivalently, $G$ is locally finite-dimensional if it has an open subgroup contained in a finite-dimensional linear subspace of~$V$.

\begin{remark}
\label{rem1}
 Clearly, any subgroup of a locally finite-dimensional group is locally finite-dimensional. Note also that 
if $G$ is a locally finite-dimensional subgroup of a TVS $V$, then so is its closure 
$\cl G$. Indeed, let $E$ be a finite-dimensional subspace of $V$ for which $E\cap G$ is open in $G$. 
The subspace $E$ is closed in $V$~\cite[Corollary~1.5.2]{BSS2017tvs}. Take an open set $O\subset V$ such that 
$O\cap G=E\cap G$. Since $G$ is dense in $\cl G$, we have 
$\es\ne O\cap \cl G\subset \cl{O\cap G}=\cl{E\cap G}\subset E\cap \cl G$. 
Thus, the subgroup $E\cap \cl G$ of $\cl G$ has 
nonempty interior and hence is open  in $\cl G$ \cite[Corollary~1.3.3]{AT}.
\end{remark}

\begin{lemma}\label{lemma:fd-subgroup}
Let $V$ be a TVS, and let $G$ be a subgroup of $V$ topologically isomorphic to a 
finite-dimensional vector space $W$. Then $G$ is a finite-dimensional vector subspace of~$V$.
\end{lemma}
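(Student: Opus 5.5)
Let $\varphi\colon W\to G$ be a topological group isomorphism, where $W\cong\R^n$ is a finite-dimensional vector space. The plan is to show that $\varphi$ is in fact $\R$-linear. Then $G=\varphi(W)$ is the image of a vector space under a linear map, hence a linear subspace of $V$ of dimension at most $n$.

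The first step is $\Q$-linearity. Since $\varphi$ is an additive homomorphism, $\varphi(kw)=k\varphi(w)$ for all $k\in\mathbb Z$. In $V$ the equation $my=x$, with $m$ a positive integer, has the unique solution $y=\frac1m x$. Therefore $m\varphi(\frac1m w)=\varphi(w)$ gives $\varphi(\frac1m w)=\frac1m\varphi(w)$, and so $\varphi(qw)=q\varphi(w)$ for all $q\in\Q$.

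The second step extends this to $\R$ by continuity. Fix $w\in W$ and $t\in\R$, and choose rationals $q_k\to t$. In $W$ we have $q_kw\to tw$, so continuity of $\varphi$ gives $\varphi(q_kw)\to\varphi(tw)$ in $G$, hence in $V$. On the other hand, scalar multiplication in the TVS $V$ is continuous, so $q_k\varphi(w)\to t\varphi(w)$ in $V$. Since $V$ is Hausdorff, limits are unique, and therefore $\varphi(tw)=t\varphi(w)$. Thus $\varphi$ is $\R$-linear, and $G=\varphi(W)$ is a linear subspace, finite-dimensional because it is spanned by the images of a basis of $W$.

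The only point needing care is that the convergence $\varphi(q_kw)\to\varphi(tw)$ takes place in the subspace topology of $G$, which is the topology inherited from $V$. Continuity of $\varphi$ into $G$ therefore yields convergence in $V$, which is what allows comparison with $q_k\varphi(w)$. Only continuity of $\varphi$ is used, not continuity of $\varphi^{-1}$. No other obstacle is expected.
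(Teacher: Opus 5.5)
Your proposal is correct and follows essentially the same route as the paper: you obtain $\Q$-linearity of $\varphi$ from additivity and unique divisibility in $V$, then extend to $\R$ using continuity of $\varphi$ and of scalar multiplication together with the Hausdorff property of $V$. The paper packages this as a claim about continuous homomorphisms $\R\to V$ applied to $t\mapsto\varphi(tw)$. Your ``dimension at most $n$'' can be sharpened to exactly $\dim W$, since $\varphi$ is injective, but this does not affect the statement.
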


\begin{proof}
Let $\varphi\colon W\to G$ be a topological isomorphism. We claim that any continuous homomorphism 
$\psi\colon \R\to V$ is linear, i.e., $\psi(t)=t\psi(1)$ for all $t\in \R$. Indeed, for $m\in\mathbb Z$ and 
 $k\in \omega\setminus\{0\}$, we have $k\psi(m/k)=\psi(m)=m\psi(1)$, whence 
$\psi(m/k)=\frac mk\psi(1)$ (we divide in the vector space $V$). Thus, the continuous maps $t\mapsto \psi(t)$ and 
$t\mapsto t\psi(1)$ coincide on $\Q$ and hence on $\R$, because $V$ is Hausdorff. 

Applying this to the continuous homomorphisms $t\mapsto \varphi(tw)$, $w\in W$, we obtain 
$\varphi(tw)=t\varphi(w)$ for all $t\in \R$ and $w\in W$. Hence $\varphi$ is an injective linear map, 
and $G=\varphi(W)$ is a linear subspace of $V$ of dimension $\dim W$.
\end{proof}

Given an abelian topological group $G$ and its subgroups $E$ and $H$ such that the map 
$E\times H\to G$ defined by $(x,y)\mapsto x+y$ is a topological isomorphism, we write $G=E\oplus 
H$.

\begin{theorem}\label{th:lc-subgroups}
For  a TVS $V$ and a  subgroup $G$ of $V$, 
the following conditions are equivalent:
\begin{enumerate}
\item[\rm (i)]
$G$ is locally compact;
\item[\rm (ii)]
$G$ is locally compact and metrizable;
\item[\rm (iii)]
$G$ is a Lie group;
\item[\rm (iv)]
$G$ is a closed locally finite-dimensional  subgroup of $V$;
\item[\rm (v)]
there exists a finite-dimensional subspace $E$ of $V$ such that $E\subset G$ and $E$ is open in~$G$;
\item[\rm (vi)]
there exists a finite-dimensional subspace $E$ of $V$ and a discrete subgroup $H$ of $G$ closed in $V$
for which $G=E\oplus H$.
\end{enumerate}
\end{theorem}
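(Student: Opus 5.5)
We would prove the cycle (i)$\Rightarrow$(v)$\Rightarrow$(vi)$\Rightarrow$(iii)$\Rightarrow$(ii)$\Rightarrow$(i), and separately (iv)$\Leftrightarrow$(v).

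The hardest step is (i)$\Rightarrow$(v); we expect it to be the main obstacle. Let $G$ be locally compact. A locally compact subgroup of a Hausdorff group is closed, so $G$ is closed in $V$. We first show that $G$ has no small subgroups. The plan is to use the vector structure. Take a compact neighborhood $K$ of $0$ in $G$ and an open neighborhood $O$ of $0$ in $V$ with $O\cap G\subset K$. Suppose $O\cap G$ contained a nontrivial subgroup $H$. Replace it by $\cl H$, which is compact since it lies in $K$. We would like to conclude that a compact subgroup of a TVS is trivial. For $x\ne 0$ the set $\mathbb Z x$ is unbounded, because $nx\in U$ for all $n$ fails for some neighborhood $U$ of $0$ (we have $x\notin$ some balanced $U$, and then $x/n\notin U$ eventually is not quite the point; rather, if $\{nx\}$ were bounded, then $x=(1/n)(nx)\to 0$, a contradiction). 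Compact sets are bounded, so every compact subgroup is $\{0\}$. Hence $G$ is a locally compact group with no small subgroups, and by the Gleason--Yamabe solution of Hilbert's fifth problem it is a Lie group. Its identity component $G_0$ is then an abelian connected Lie group. It contains no nontrivial compact subgroup, so it is torsion-free and has no torus factor. Thus $G_0\cong\R^n$ as topological groups, and Lemma~\ref{lemma:fd-subgroup} shows that $G_0=E$ is a finite-dimensional linear subspace. Since $G$ is locally connected, $G_0$ is open in $G$, which gives (v). If we wish to avoid Gleason--Yamabe, we would instead try to show directly that the compact neighborhood $K$ generates an open subgroup lying in $\operatorname{span}$ of finitely many elements. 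That route looks harder, so we would rely on the structure theorem for locally compact abelian groups (for compactly generated LCA groups: $\R^n\times\mathbb Z^m\times$ compact), which suffices here. The compact part vanishes, and an open compactly generated subgroup exists.

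(v)$\Rightarrow$(vi): the divisible group $E\cong\R^n$ is a direct summand of $G$ algebraically, so $G=E+H$ with $E\cap H=0$. Since $E$ is open in $G$, $G/E$ is discrete. The algebraic isomorphism $E\times H\to G$ is continuous. Its inverse is continuous because $H\cap E=\{0\}$ is open in $H$, so $H$ is discrete. The projection to $H$ is locally constant, being constant on cosets of the open subgroup $E$, and the projection to $E$ is then $x\mapsto x-h(x)$, also continuous. To see that $H$ is closed in $V$, note that $G$ is closed in $V$ (being locally compact, $E$ being closed and open in $G$) and $H$ is discrete hence closed in $G$.

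(vi)$\Rightarrow$(iii)$\Rightarrow$(ii)$\Rightarrow$(i) are immediate: $E\times H\cong\R^n\times$ discrete is a Lie group, Lie groups are locally compact and metrizable, and (ii) trivially gives (i). Finally, for (iv)$\Leftrightarrow$(v): (v) gives local compactness, hence closedness, so (iv) holds. Conversely, under (iv), with $E\cap G$ open in $G$, the set $E\cap G$ is a closed subgroup of $E\cong\R^n$, hence locally compact. An open locally compact subgroup makes $G$ locally compact, and we conclude by (i)$\Rightarrow$(v).
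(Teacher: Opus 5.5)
Your proof is correct, but at the key step (i)$\Rightarrow$(rest) it uses different machinery from the paper.

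The paper gets everything from (i) at once from the vector group splitting theorem for locally compact abelian groups: $G=\R^m\times H$, where $H$ contains a compact open subgroup. Lemma~\ref{lemma:fd-subgroup} then makes the $\R^m$ factor a linear subspace. Since compact subgroups of $V$ are trivial, $H$ is discrete, and (vi), hence (ii)--(v), follows directly.

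You instead establish the no-small-subgroups property, invoke Gleason--Yamabe, and then classify connected abelian Lie groups. This is valid but much heavier than necessary, because $G$ is abelian. The LCA structure theory you mention as a fallback (compactly generated LCA groups are $\R^n\times\mathbb Z^m\times{}$compact) is essentially what the paper uses. On that route you still need that the $\R^n$ factor is open in $G$; this is immediate, since $\mathbb Z^m$ is discrete and the compactly generated subgroup is open.

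Your organization has its own merit. You derive the splitting (vi) from (v) by an elementary argument: $E$ is divisible, hence an algebraic direct summand, and $E$ is open in $G$. This needs no structure theorem, whereas the paper reads the splitting off the structure theorem. You also give an explicit check of (iv)$\Leftrightarrow$(v).

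Your argument that compact subgroups of a TVS are trivial is a fine alternative to the paper's argument via the compact subgroup $K\cap\R g$ of a line. It rests on two facts: compact sets are bounded, and $\mathbb Z x$ is unbounded for $x\ne0$ because $x=\frac1n(nx)$. Do clean up the false start in that parenthetical.
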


\begin{proof}
By F.~Riesz' theorem (see \cite[Theorem~1.5.1]{BSS2017tvs}) any subspace $E$ of finite dimension $n$ in any 
TVS $V$ is topologically isomorphic to $\mathbb R^n$. Therefore, each 
of the conditions (iv)--(vi) implies (i); in the case of (iv), note that $E\cap G$ is closed in $E$ and 
hence locally compact. Clearly, (ii) and (iii) imply (i) as well. 

Suppose that condition (i) holds. Then $G$ is closed in $V$ \cite[Proposition~1.4.19]{AT}. By the vector group splitting theorem  
\cite[Theorem~7.57]{Hofmann-Morris} any locally compact abelian group $G$ can be written as the direct 
product of $E=\R^m$ (where $m$ is a nonnegative integer) and a locally compact group $H$ 
containing a compact open subgroup. By Lemma~\ref{lemma:fd-subgroup}\, $E$ is a finite-dimensional subspace of $V$. 
Any compact subgroup $K$ of $V$ is trivial: if $g\in K\setminus\{0\}$, then $K\cap \R g$ is a compact 
subgroup of the line $\R g\cong \R$ containing $g\ne 0$, which is impossible. Hence $H$ is discrete, and $H$ is 
closed in $V$, being locally compact. 
 Therefore, (i) implies (ii)--(vi).
\end{proof}

\begin{theorem}\label{th:lfd-subgroups}
For  a TVS $V$ and a  subgroup $G$ of $V$, 
the following conditions are equivalent:
\begin{enumerate}
\item[\rm (i)]
$G$ is locally finite-dimensional;
\item[\rm (ii)]
the closure of $G$ in $V$ is locally compact;
\item[\rm (iii)]
$G$ is locally precompact.
\end{enumerate}
\end{theorem}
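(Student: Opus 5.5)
I plan to prove the cycle (i)$\Rightarrow$(ii)$\Rightarrow$(iii)$\Rightarrow$(ii)$\Rightarrow$(i), leaning on Theorem~\ref{th:lc-subgroups} and Remark~\ref{rem1}.

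For (i)$\Rightarrow$(ii), Remark~\ref{rem1} shows that $\cl G$ is again locally finite-dimensional, and it is closed in $V$. So $\cl G$ satisfies condition (iv) of Theorem~\ref{th:lc-subgroups}, hence condition (i) there: $\cl G$ is locally compact.

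For (ii)$\Rightarrow$(iii), let $K$ be a compact neighborhood of $0$ in $\cl G$. Then $K\cap G$ is a neighborhood of $0$ in $G$, and it is precompact in $V$ because it lies in a compact set. By the observation in the Preliminaries, precompactness in $V$ is the same as precompactness in the subgroup $G$. Hence $G$ is locally precompact.

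For (iii)$\Rightarrow$(ii), I use the completion. Since $G$ is locally precompact, its Ra\u\i kov completion $\varrho G$ is locally compact. This completion is the closure of $G$ in $\widehat V$, which is itself a TVS. So $\varrho G$ is a locally compact subgroup of the TVS $\widehat V$.

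I expect the main obstacle to be transferring this local compactness from $\widehat V$ back to $\cl G\subset V$. Applying Theorem~\ref{th:lc-subgroups} inside $\widehat V$, I obtain a finite-dimensional subspace $E\subset\widehat V$ that is contained in $\varrho G$ and open in it. Then $E\cap G$ is open in $G$. The remaining issue is that $E$ need not lie in $V$. To handle this, I replace $E$ by the subspace $E_0$ spanned by $E\cap G$; it is finite-dimensional and lies in $V$. Since $E_0\subset E$, we have $E_0\cap G\subset E\cap G$. Conversely, $E\cap G\subset E_0$, so $E\cap G\subset E_0\cap G$. Hence $E_0\cap G=E\cap G$ is open in $G$, and $G$ is locally finite-dimensional. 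This actually proves (iii)$\Rightarrow$(i) directly, which together with (i)$\Rightarrow$(ii) also yields (iii)$\Rightarrow$(ii).

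To close the cycle, it remains to show (ii)$\Rightarrow$(i). Theorem~\ref{th:lc-subgroups}, applied to the locally compact subgroup $\cl G$ of $V$, gives a finite-dimensional $E\subset V$ open in $\cl G$. Then $E\cap G$ is open in $G$, so $G$ is locally finite-dimensional. This completes the cycle.
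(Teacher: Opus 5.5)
Your proof is correct and follows essentially the same route as the paper. You get (i)$\Rightarrow$(ii) from Remark~\ref{rem1} and Theorem~\ref{th:lc-subgroups}, and (iii)$\Rightarrow$(i) by applying Theorem~\ref{th:lc-subgroups} to the closure of $G$ in $\widehat V$, which is its Ra\u\i kov completion. The only cosmetic differences are that you bring $E$ back into $V$ via the span of $E\cap G$ rather than the paper's $E\cap V$ (both work), and that your separate (ii)$\Rightarrow$(i) step is redundant once (ii)$\Rightarrow$(iii)$\Rightarrow$(i) is established.
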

\begin{proof}
According to Remark~\ref{rem1}, 
a subgroup $G$ of a TVS $V$ is locally finite-dimensional in $V$ if and only if 
so is its closure $\cl G$. Therefore, Theorem~\ref{th:lc-subgroups} implies the equivalence of 
(i)  and (ii). Clearly, (iii)  follows from~(ii).

Let us prove  that (iii) implies (i). Let $\widehat V$ be the completion of $V$, and let $H$ be the closure of $G$ 
in $\widehat V$. Being a closed subgroup of the complete group $\widehat V$, the group $H$ is complete, and $G$ 
is dense in $H$; hence $H$ is the Ra\u\i kov completion of $G$. Since $G$ is locally precompact, $H$ is locally 
compact. By Theorem~\ref{th:lc-subgroups} applied to the TVS $\widehat V$, there exists a 
finite-dimensional subspace $E$ of $\widehat V$ for which $E\cap H$ is open in $H$. Then $E\cap G$ is open in $G$, 
and $E_0=E\cap V$ is a finite-dimensional subspace of $V$ such that $E_0\cap G=E\cap G$. Thus, $G$ is locally 
finite-dimensional.
\end{proof}

\begin{proposition}
\label{prop-new}
A linear subspace $W$ of a TVS $V$ is locally finite-dimensional as a subgroup of $V$ if and only if $W$ is finite-dimensional. 
\end{proposition}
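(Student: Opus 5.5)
The converse direction is immediate. If $W$ is finite-dimensional, take $E=W$ in Definition~\ref{lfd}; then $W\cap E=W$ is open in $W$.

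For the nontrivial direction, suppose $E$ is a finite-dimensional subspace of $V$ with $W\cap E$ open in $W$. The plan is to show that an open subgroup of a linear subspace must be the whole subspace, using the scalar multiplication on $W$. Put $U=W\cap E$. This is a linear subspace of $W$, because both $W$ and $E$ are linear subspaces. It is also an open neighborhood of $0$ in $W$, where $W$ carries the topology induced from $V$.

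The key step is that an open neighborhood of zero in a TVS is absorbing, and $W$ with the induced topology is itself a TVS. Fix any $x\in W$. The map $t\mapsto tx$ from $\R$ to $W$ is continuous, so $tx\to 0$ as $t\to 0$. Hence there is $t\neq 0$ with $tx\in U$. Since $U$ is a linear subspace, $x=t^{-1}(tx)\in U$. Therefore $W=U\subset E$, and $W$ is finite-dimensional, being a subspace of the finite-dimensional space $E$.

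I expect no real obstacle here. The only point needing care is to use the linear structure of $U=W\cap E$, not merely its group structure: an open subgroup of $W$ need not be closed under real scalars, but $W\cap E$ is, because it is an intersection of linear subspaces.
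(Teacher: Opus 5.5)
Your proof is correct, and it takes a different route from the paper's. The paper notes that $W\cap E$ is an open finite-dimensional subspace of $W$, hence topologically isomorphic to some $\R^n$. It concludes that $W$ is a locally compact TVS, and then invokes the classical theorem that every locally compact TVS is finite-dimensional.

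You use instead only that neighborhoods of zero are absorbing, i.e., that an open linear subspace of a TVS must be the whole space. This is more elementary: it needs neither the isomorphism of finite-dimensional subspaces with $\R^n$ nor the local-compactness theorem, and it does not even use the Hausdorff axiom. It also gives a sharper conclusion. You get $W\subset E$, so $W$ lies inside the very finite-dimensional subspace witnessing local finite-dimensionality, whereas the paper only gets $\dim W<\infty$.

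The paper's version is a one-line appeal to standard facts. It also fits the paper's theme of linking local finite-dimensionality with local compactness and precompactness (Theorems~\ref{th:lc-subgroups} and~\ref{th:lfd-subgroups}).
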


\begin{proof}                                                    		
If a subspace $W$ is locally finite-dimensional, then it contains an open finite-dimensional subspace and is 
therefore locally compact, because any finite-dimensional space is topologically isomorphic to $\mathbb R^n$ for 
some $n\in \omega$. It is well known that any locally compact vector space is 
finite-dimensional~\cite[Theorem~1.5.6]{BSS2017tvs}. 

The converse is obvious.
\end{proof}

\section{A criterion for local finite dimensionality}
\label{section4}

Given a TVS  $V$, a norm $\norm$ on $V$ (not necessarily related to the topology 
of $V$), and an $r > 0$,  we set 
\[
B(r)=\{x\in V:\|x\|\leq r\}. 
\]
We say that a set $A\subset V$ is \emph{norm-bounded} if there exists an $r\in \mathbb R$ 
for which $A\subset B(r)$. The sets $B(r)$, $r > 0$, form a neighborhood base at zero for a vector space 
topology on $V$, which may coincide or not with the given topology of $V$. 
We refer to this topology on $V$, as well as to the topology induced by it on any $A\subset V$, 
as the \emph{topology generated by the norm $\norm$}. 

\begin{proposition}
\label{prop:sg-tvs}
Let $V$ be a TVS, and let $\norm$ be a norm on $V$.
If $G$ is a subgroup of $V$ and some nonempty open set $U\subset G$ is norm-bounded, then
\begin{enumerate}
\item[{\rm(i)}]
the topology of $G$ is finer than that generated by $\norm$\textup;
\item[{\rm(ii)}]
if the topology on $V$ generated by $\norm$ is finer than the given topology of $V$, 
then the topology of $G$ coincides with  that generated by~$\norm$\textup;
\item[{\rm(iii)}]
if there exists an $r>0$ for which the ball $B(r)$ is precompact in $V$, then $G$ is locally finite-dimensional.
\end{enumerate}
\end{proposition}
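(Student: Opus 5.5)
First translate the hypothesis into a norm-bounded neighborhood of zero. Let $U\subset G$ be nonempty, open in $G$, with $U\subset B(r_0)$. Fix $g\in U$. Then $U-g$ is an open neighborhood of $0$ in $G$, and by the triangle inequality $U-g\subset B(r_0+\|g\|)$. Put $r_1=r_0+\|g\|$. Hence $G$ has a neighborhood $W$ of zero with $W\subset B(r_1)$.

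\emph{Part (i).} For $t>0$ the map $x\mapsto (t/r_1)x$ is a homeomorphism of $V$, but it does not preserve $G$. So I will use group operations instead of scalar multiplication. Given $\varepsilon>0$, choose $n\in\omega$ with $r_1/2^n<\varepsilon$. By continuity of addition in $G$, pick a neighborhood $W_n$ of zero in $G$ such that
\[
\underbrace{W_n+\dots+W_n}_{2^n}\subset W .
\]
Suppose $x\in W_n\cap(-W_n)$; we may take $W_n$ symmetric. Then $kx\in W\subset B(r_1)$ for every $k\le 2^n$, and in particular $\|2^n x\|\le r_1$. Since $\|\cdot\|$ is a norm, $\|x\|\le r_1/2^n<\varepsilon$. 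Thus $W_n\subset B(\varepsilon)\cap G$. Because both topologies are group topologies on $G$, agreement at zero suffices, so the topology of $G$ is finer than the one generated by $\norm$.

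\emph{Part (ii).} The norm topology on $V$ is finer than the topology of $V$, so its restriction to $G$ is finer than the topology of $G$ inherited from $V$. Combined with (i), the two topologies coincide.

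\emph{Part (iii).} By Theorem~\ref{th:lfd-subgroups}, it suffices to show that $G$ is locally precompact. Apply the construction from (i) with $\varepsilon=r$. This gives a neighborhood $W_n$ of zero in $G$ with $W_n\subset B(r)$. Since $B(r)$ is precompact in $V$, its subset $W_n$ is precompact in $V$. By the remark in the Preliminaries, precompactness in $V$ of a subset of the subgroup $G$ is the same as precompactness in $G$. Hence $G$ has a precompact neighborhood of zero, so it is locally precompact, and therefore locally finite-dimensional.

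The main obstacle is the shrinking step in (i), because $G$ is only a group and the bounded neighborhood cannot be rescaled by scalars. The doubling trick handles this: it turns ``$2^n x$ is norm-bounded'' into ``$x$ is small in norm'' using only homogeneity of the norm and continuity of addition in $G$. The rest is routine.
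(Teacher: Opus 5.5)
Your proposal is correct and is essentially the paper's proof: you shift the norm-bounded open set to a norm-bounded neighborhood $W$ of zero, use continuity of addition in $G$ to find $W_m$ with $mW_m\subset W$ (you take $m=2^n$), get $W_m\subset B(r_1/m)$ from homogeneity of the norm, and for (iii) shrink into $B(r)$ and combine precompactness with Theorem~\ref{th:lfd-subgroups}. The symmetry of $W_n$ is not needed, since $2^nx\in W_n+\dots+W_n\subset W$ for every $x\in W_n$.
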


\begin{proof}
(i) Suppose that $U\subset B(r)$, where $r>0$, and $x\in U$. Let $W=U-x$. Then $W$ is a neighborhood 
of zero in $G$ and $W\subset B(2r)$.
For each  positive integer $m$, the continuity of addition in $G$ 
implies the existence of a neighborhood $W_m$ of zero in $G$ such that $m W_m\subset W$ and hence 
$W_m\subset B(2r/m)$. Since both topologies are invariant under translations, 
the topology of $G$ is finer than that generated by the norm~$\norm$.

Assertion (ii) follows from (i) because the topology of $G$ is induced from $V$ and 
is therefore coarser than the norm topology.

(iii) As shown in the proof of (i), $G$ has a neighborhood $W$ of zero with $W\subset B(s)$ for some 
$s>0$. Let $m$ be a positive integer such that $s/m\le r$, and let $W'$ be a neighborhood of zero in $G$ 
for which $m W'\subset W$. Then $W'\subset B(r)$. By assumption $B(r)$ is precompact in $V$; hence so is 
$W'$, and $W'$ is precompact in $G$ as well (see Section~\ref{Preliminaries}). 
Consequently, $G$ is locally precompact. By Theorem~\ref{th:lfd-subgroups} the group $G$ 
is locally finite-dimensional.
\end{proof}

\begin{proposition}
\label{prop:sg-baire-tvs}
Let $V$ be a TVS, and let $\norm$ be a norm on $V$ such that
 the ball $B(r)=\{x\in V:\|x\|\leq r\}$ is closed in $V$ for any $r \geq 0$.
If $G$ is a Baire subgroup of $V$, then
\begin{enumerate}
\item[{\rm(i)}]
the topology of $G$ is finer than that generated by $\norm$\textup;
\item[{\rm(ii)}]
if the topology generated by $\norm$ on $V$ is finer than the given topology of $V$, 
then the topology of $G$ coincides with that generated by $\norm$\textup;
\item[{\rm(iii)}]
if there exists an $r>0$ for which the ball $B(r)$ is precompact, then $G$ 
is locally finite-dimensional.
\end{enumerate}
\end{proposition}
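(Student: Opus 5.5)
The plan is to reduce everything to Proposition~\ref{prop:sg-tvs}: it suffices to find a nonempty open subset of $G$ that is norm-bounded. Assertions (i)--(iii) then follow verbatim from parts (i)--(iii) of that proposition, since their extra hypotheses (finer norm topology; precompact ball $B(r)$) are the same.

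To find such a set, I will use the Baire property of $G$. Since $\norm$ is a norm, every $x\in V$ has finite norm, so
\[
G=\bigcup_{n\in\omega}(B(n)\cap G),
\]
where $B(n)=\{x\in V:\|x\|\le n\}$. By hypothesis each $B(n)$ is closed in $V$, so each $B(n)\cap G$ is closed in $G$. As $G$ is Baire, it cannot be a countable union of closed nowhere dense subsets. (If all $B(n)\cap G$ were nowhere dense, their complements would be dense open sets with empty intersection, contradicting the Baire property, since $G\neq\es$.) Hence some $B(n)\cap G$ has nonempty interior $U$ in $G$. This $U$ is a nonempty open subset of $G$ contained in $B(n)$, so it is norm-bounded.

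Applying Proposition~\ref{prop:sg-tvs} to $U$ gives (i) and (ii) directly, and (iii) as soon as some ball $B(r)$ is precompact in $V$. The only point needing care is the use of closedness of the balls: without it, the sets $B(n)\cap G$ need not be closed, and the Baire category argument would not yield a set with nonempty interior. I do not expect any step to be a serious obstacle; the argument is a routine Baire category step followed by a citation.
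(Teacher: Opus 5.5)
Your proposal is correct and follows essentially the same route as the paper: cover $G$ by the closed sets $G\cap B(n)$, use the Baire property to get one with nonempty interior (a nonempty norm-bounded open subset of $G$), and then invoke Proposition~\ref{prop:sg-tvs}. Your remark that $G\neq\es$ (it contains $0$) is a small detail the paper leaves implicit.
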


\begin{proof}
In view of Proposition~\ref{prop:sg-tvs}, it suffices to show that there exists a non\-empty 
norm-bounded set $U \subset G$ open in $G$.

Let $F_n = G \cap B(n)$ for $n \in \omega$. Then all $F_n$ are closed in $G$ and 
$G = \bigcup_{n \in \omega} F_n$, so that $G$  being Baire  implies  the 
existence of an $n\in \omega$ for which the interior 
$\Int(B(n)\cap G)$ of $B(n)\cap G$ in $G$ is nonempty. Clearly, $U=\Int(B(n)\cap G)$ 
has the required properties. 
\end{proof}

\begin{proposition} 
\label{prop:kfu-group-tvs}
Let $V$ be a TVS, and let $G$ be a $\varkappa$-Fr\'{e}chet--Urysohn 
subgroup of $V$. Suppose that $\norm$ is a norm on $V$ satisfying the following conditions:
\begin{enumerate}
\item[{\rm(a)}]
for any $r > 0$, the ball $B(r)$ is closed in $V$;
\item[{\rm(b)}]
the family $\mathscr F=\{B(n): n\in \omega\}$ absorbs convergent sequences in $V$. 
\end{enumerate}
Then
\begin{enumerate}
\item[{\rm(i)}]
the topology of $G$ is finer than that generated by $\norm$\textup;
\item[{\rm(ii)}]
if the topology generated by $\norm$ on $V$ is finer than the given topology of $V$, 
then the topology of $G$ coincides with that generated by $\norm$\textup;
\item[{\rm(iii)}]
if there exists an $r>0$ for which $B(r)$ is precompact, then $G$ is locally finite-dimensional.
\end{enumerate}
\end{proposition}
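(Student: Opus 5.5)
As in the proof of Proposition~\ref{prop:sg-baire-tvs}, everything reduces to Proposition~\ref{prop:sg-tvs}. It suffices to produce a nonempty subset of $G$ that is open in $G$ and norm-bounded. Once we have such a set, assertions (i), (ii), and (iii) follow verbatim from the corresponding items of Proposition~\ref{prop:sg-tvs}. So the whole task is to find a norm-bounded neighborhood of zero in $G$.

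To this end we consider the sets $F_n=G\cap B(n)$, $n\in\omega$.

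\begin{itemize}
\item By hypothesis (a), each $F_n$ is closed in $G$.
\item The family $\{F_n:n\in\omega\}$ is increasing, since the balls $B(n)$ increase with $n$.
\item By hypothesis (b) and Remark~\ref{rem:finer}, applied with $Y=G$, the family $\{F_n:n\in\omega\}$ absorbs convergent sequences in $G$. Indeed, a sequence converging in $G$ converges in $V$, so it lies in some $B(n)$, and hence in $F_n$.
\end{itemize}

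Since $G$ is a $\varkappa$-Fr\'echet--Urysohn topological group, Proposition~\ref{prop:kfu-group-as} applies. It yields an $n$ for which $F_n$ is a neighborhood of zero in $G$. Then $U=\Int_G F_n$ is a nonempty open subset of $G$ contained in $B(n)$, hence norm-bounded, as required.

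There is no real obstacle here. The only points to check are that closedness of the balls in $V$ passes to closedness of $F_n$ in $G$, and that absorption of convergent sequences restricts to the subgroup. Both are immediate.
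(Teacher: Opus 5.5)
Your proof is correct and matches the paper's argument: you reduce everything to Proposition~\ref{prop:sg-tvs}, then apply Proposition~\ref{prop:kfu-group-as} to the increasing family of closed sets $F_n=G\cap B(n)$, which absorbs convergent sequences in $G$. This yields a norm-bounded neighborhood of zero in $G$, which is all Proposition~\ref{prop:sg-tvs} needs.
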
 

\begin{proof}
In view of Proposition~\ref{prop:sg-tvs}, it suffices to prove the existence of a non\-empty norm-bounded 
set $U \subset G$ open in~$G$.

Let $F_n=G\cap B(n)$ for $n\in\omega$.
Then the increasing family $\{F_n: n\in \omega\}$ of closed subsets of the $\varkappa$-Fr\'{e}chet--Urysohn topological group $G$ absorbs convergent sequences in $G$.
According to Proposition~\ref{prop:kfu-group-as}, the interior $Int(F_n)$ of $F_n$ in $G$ is a neighborhood 
of zero in $G$ for some $n \in \omega$; clearly, it is norm-bounded.
\end{proof}

The following theorem summarizes Propositions~\ref{prop:sg-baire-tvs} and~\ref{prop:kfu-group-tvs}; we use it as the main tool to establish our results on TVS.

\begin{theorem}
\label{th:criterion}
Let $V$ be a TVS, and let $\norm$ be a norm on $V$ such that
\begin{enumerate}
\item[{\rm(a)}] every ball $B(r)$, $r>0$, is closed in $V$;
\item[{\rm(b)}] the family $\{B(n): n\in \omega\}$ absorbs convergent sequences in $V$;
\item[{\rm(c)}] the ball $B(r)$ is precompact in $V$ for some \textup(equivalently, for all\textup) $r>0$.
\end{enumerate}
Then every $\varkappa$-Fr\'echet--Urysohn subgroup and every Baire subgroup of $V$ is locally finite-dimensional, 
and every $\varkappa$-Fr\'echet--Urysohn or Baire linear subspace of $V$ is finite-dimensional. The same holds for 
every linear subspace of~$V$ with the induced topology.
\end{theorem}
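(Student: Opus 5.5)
The theorem is essentially a packaging of Propositions~\ref{prop:sg-baire-tvs}, \ref{prop:kfu-group-tvs} and~\ref{prop-new}, so the plan is to check their hypotheses and chain them together.

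First, the parenthetical ``equivalently, for all'' in (c). Since $B(r)=(r/s)B(s)$ and multiplication by a nonzero scalar is a homeomorphism of $V$ preserving the additive structure, the image of a precompact set is precompact. Indeed, if $B(s)\subset M+O$ for every neighborhood $O$ of zero, with $M$ finite, then
\[
B(r)\subset (r/s)M+(r/s)O,
\]
and the sets $(r/s)O$ run over all neighborhoods of zero. Hence precompactness of one ball gives precompactness of all.

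Next, let $G$ be a subgroup of $V$.
\begin{itemize}
\item If $G$ is $\varkappa$-Fr\'echet--Urysohn, then hypotheses (a) and (b) are exactly those of Proposition~\ref{prop:kfu-group-tvs}, and (c) supplies the precompact ball needed in its item (iii). Hence $G$ is locally finite-dimensional.
\item If $G$ is Baire, then (a) is the hypothesis of Proposition~\ref{prop:sg-baire-tvs}, and again (c) gives its item (iii). So $G$ is locally finite-dimensional.
\end{itemize}
For a linear subspace $W$ of $V$ that is $\varkappa$-Fr\'echet--Urysohn or Baire, $W$ is in particular such a subgroup. It is therefore locally finite-dimensional, and by Proposition~\ref{prop-new} it is finite-dimensional.

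Finally, the last sentence: let $L\subset V$ be a linear subspace with the induced topology, which is again a TVS. Restricting $\norm$ to $L$, we must verify (a)--(c) for $L$.
\begin{itemize}
\item For (a), the balls of $L$ are $B(r)\cap L$, which are closed in $L$.
\item For (b), by Remark~\ref{rem:finer} the family $\{B(n)\cap L\}$ absorbs convergent sequences in $L$.
\item For (c), $B(r)\cap L$ is a subset of $L$ that is precompact in $V$. The remark in Section~\ref{Preliminaries} is stated for subsets of a subgroup, and $L$ is a subgroup of $V$, so $B(r)\cap L$ is precompact in $L$.
\end{itemize}
Thus $L$ satisfies (a)--(c), and the first part of the theorem applies to $L$. (Alternatively, one can note directly that subgroups of $L$ are subgroups of $V$ carrying the same topology, and that local finite-dimensionality does not depend on whether the ambient space is $L$ or $V$: a finite-dimensional subspace of $V$ meeting $G\subset L$ in an open set can be intersected with $L$.)

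The only genuinely delicate point is the precompactness transfer to $L$ in (c); everything else is direct verification. That transfer is already covered by the preliminary remark on subgroups, so I expect no real obstacle.
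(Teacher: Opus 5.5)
Your proposal is correct and matches the paper's proof, which likewise chains Propositions~\ref{prop:kfu-group-tvs}(iii), \ref{prop:sg-baire-tvs}(iii) and~\ref{prop-new}. For the final sentence the paper uses exactly your parenthetical alternative, intersecting the finite-dimensional subspace $E$ of $V$ with $W$. Your main route of re-checking (a)--(c) on $L$ is equally valid, as is your scaling argument for the ``equivalently, for all'' clause, which the paper leaves implicit.
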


\begin{proof}
The first assertion follows from Propositions~\ref{prop:kfu-group-tvs}(iii) and~\ref{prop:sg-baire-tvs}(iii), and 
the second assertion follows from the first assertion and Proposition~\ref{prop-new}. If $W$ is a linear subspace of $V$ and a subgroup 
$G\subset W$ is locally finite-dimensional in $V$, i.e., $E\cap G$ is open in $G$ for a finite-dimensional 
subspace $E$ of $V$, then $E\cap W$ is a finite-dimensional subspace of $W$ and $(E\cap W)\cap G=E\cap G$.
\end{proof}

\section{Normed spaces with the weak and weak$^*$ topologies}
\label{section5}

In this section, $E$ is a normed space and $B(r)=\{x\in E:\|x\|\le r\}$ is the closed ball of radius $r>0$. 
The completeness of $E$ is not assumed.

\begin{lemma}\label{lemma:bs-as}
Given any normed space $E$, the family $\mathscr B=\{B(n): n\in \omega\}$ consists of closed precompact 
subsets of $E_w$ and absorbs convergent sequences in~$E_w$.
\end{lemma}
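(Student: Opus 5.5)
We verify the three properties of the closed balls $B(n)$ separately: weak closedness, weak precompactness, and absorption of weakly convergent sequences.

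\emph{Closedness.} By the Hahn--Banach theorem, $B(r)=\bigcap_{\varphi\in E',\ \|\varphi\|\le 1}\{x\in E:|\varphi(x)|\le r\}$. Indeed, for $x$ with $\|x\|>r$ there is a norming functional $\varphi$ with $\|\varphi\|=1$ and $\varphi(x)=\|x\|>r$. Each set in this intersection is weakly closed, so $B(r)$ is closed in $E_w$. Completeness of $E$ is not needed here.

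\emph{Precompactness.} As recalled in Section~\ref{Preliminaries}, a subset of $E$ is precompact in $E_w$ if and only if it is weakly bounded. For $\varphi\in E'$ we have $|\varphi(x)|\le\|\varphi\|\,n$ on $B(n)$, so $B(n)$ is weakly bounded and hence precompact in $E_w$.

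\emph{Absorption.} This is the only step with content. Let $x_k\to x$ in $E_w$. Then $S=\{x\}\cup\{x_k:k\in\omega\}$ is weakly bounded, since for each $\varphi\in E'$ the convergent real sequence $\varphi(x_k)$ is bounded. We must show that $S$ is norm-bounded, i.e.\ that weakly bounded sets are norm-bounded even when $E$ is incomplete. Embed $E$ isometrically into $E''$ via $x\mapsto\hat x$, with $\hat x(\varphi)=\varphi(x)$. The dual $E'$ is a Banach space whether or not $E$ is complete. The family $\{\hat s:s\in S\}$ is a pointwise bounded family of continuous linear functionals on $E'$. By the uniform boundedness principle on $E'$, we get $\sup_{s\in S}\|s\|=\sup_{s\in S}\|\hat s\|<\infty$. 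Hence $S\subset B(n)$ for some $n$.

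The step I expect to require care is exactly this use of the Banach--Steinhaus theorem on $E'$ rather than on $E$. That is the point where the absence of completeness of $E$ is circumvented.
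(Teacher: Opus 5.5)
Your proposal is correct and follows essentially the same route as the paper: Hahn--Banach for weak closedness of the balls, the weakly bounded $\Leftrightarrow$ precompact-in-$E_w$ fact for precompactness, and Banach--Steinhaus applied on the Banach space $E'$ via the isometric embedding $E\to E''$ to show that weakly convergent sequences are norm-bounded. The paper packages this as the equivalence of norm-boundedness, weak boundedness, and weak precompactness, which differs from your argument only in presentation.
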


\begin{proof}
For any $A\subset E$, the following conditions are equivalent:
\begin{enumerate}
\item[\rm (a)] $A$ is norm-bounded;
\item[\rm (b)] $A$ is weakly bounded, i.e., bounded in $E_w$;
\item[\rm (c)] $A$ is precompact in $E_w$.
\end{enumerate}
The implication (a)$\implies$(b) is obvious, and (b)$\Longleftrightarrow$(c) was noted earlier in Section \ref{Preliminaries}. Let us prove 
(b)$\implies$(a). The dual $E'$ of $E$ is a Banach space, and the canonical embedding $E\to E''$ is an 
isometry. If $A$ is weakly bounded, then the family of functionals $\{\varphi\mapsto\varphi(x): x\in A\}$ on 
the Banach space $E'$ is pointwise bounded; by the Banach--Steinhaus uniform boundedness 
principle~\cite[Theorem~3.5.4]{BSS2017tvs}, it is norm-bounded, i.e., $A$ is norm-bounded.

By the Hahn--Banach theorem, 
$$
B(n)=\bigcap\{\{x:|\varphi(x)|\le n\}:\varphi\in E',\ \|\varphi\|\le 1\},
$$ 
so that 
$B(n)$ is closed in $E_w$. Consequently, the family $\mathscr B$ consists of closed precompact subsets of $E_w$. 
Moreover, any convergent sequence in $E_w$ is bounded in $E_w$ and hence norm-bounded, which implies that the 
family $\mathscr B$ absorbs convergent sequences in~$E_w$.
\end{proof}

\begin{proposition}
\label{prop:bs-kfu-baire-sgs}
Let $E$ be a normed space, and let $\tau$ be a vector space topology on $E$ between the topologies of $E$ and $E_w$.
 If $G$ is a $\varkappa$-Fr\'{e}chet--Urysohn or Baire subgroup of the space $(E,\tau)$, then the topology of $G$ 
coincides with the topology induced from~$E$.
\end{proposition}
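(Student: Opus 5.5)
The plan is to apply Propositions~\ref{prop:kfu-group-tvs}(ii) and~\ref{prop:sg-baire-tvs}(ii) to the TVS $V=(E,\tau)$, taking for $\norm$ the original norm of $E$. The topology generated by this norm is the norm topology of $E$. By hypothesis this topology is finer than $\tau$. So once conditions (a) and (b) of those propositions are verified for $(E,\tau)$, part (ii) yields that the topology of $G$ coincides with the norm topology induced from $E$. This is exactly the claim.

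The first step is to check that every ball $B(r)$ is closed in $(E,\tau)$. By Lemma~\ref{lemma:bs-as}, $B(r)$ is closed in $E_w$. Since $\tau$ is finer than the weak topology, $B(r)$ is also $\tau$-closed. The case $r=0$, needed in Proposition~\ref{prop:sg-baire-tvs}, is trivial: $B(0)=\{0\}$ is closed because $\tau$ is Hausdorff, being finer than the Hausdorff topology $w$.

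The second step is to check that $\{B(n):n\in\omega\}$ absorbs convergent sequences in $(E,\tau)$. Lemma~\ref{lemma:bs-as} gives absorption in $E_w$, and Remark~\ref{rem:finer} transfers it to the finer topology $\tau$: every $\tau$-convergent sequence is weakly convergent, hence lies in some $B(n)$.

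With (a) and (b) established, the $\varkappa$-Fr\'echet--Urysohn case follows from Proposition~\ref{prop:kfu-group-tvs}(ii), and the Baire case from Proposition~\ref{prop:sg-baire-tvs}(ii), which only needs (a). I expect no real obstacle. The only point requiring care is the direction of the comparison of topologies: closedness and absorption pass from $E_w$ up to $\tau$ because $\tau$ is finer than $w$, while the comparison hypothesis of (ii) uses that the norm topology is finer than $\tau$.
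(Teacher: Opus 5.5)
Your proposal is correct and is essentially the paper's proof: the paper likewise uses Lemma~\ref{lemma:bs-as} and Remark~\ref{rem:finer} to see that the balls $B(n)$ are closed in $(E,\tau)$ and absorb $\tau$-convergent sequences. It then applies Propositions~\ref{prop:kfu-group-tvs}(ii) and~\ref{prop:sg-baire-tvs}(ii), using that the norm topology is finer than $\tau$.

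One small point: Lemma~\ref{lemma:bs-as} is stated only for integer radii. Your closedness check for arbitrary $r$ still goes through, since $B(r)=rB(1)$ is weakly closed by the same Hahn--Banach description.
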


\begin{proof}
By Lemma~\ref{lemma:bs-as} and Remark~\ref{rem:finer} the family $\mathscr B=\{B(n): n\in \omega\}$ consists of closed 
subsets of $(E,\tau)$ and absorbs convergent sequences in $(E,\tau)$.

If $G$ is $\varkappa$-Fr\'{e}chet--Urysohn, then the 
topology of $G$ coincides with that induced from $E$ by Proposition~\ref{prop:kfu-group-tvs}(ii).

If $G$ is Baire, then  the topology of $G$ coincides 
with that induced from $E$ by Proposition~\ref{prop:sg-baire-tvs}(ii).
\end{proof}

\begin{theorem}
\label{th:bs-kfu-baire-sgs}
Let $E$ be a normed space, and let $G$ be a subgroup of the space $E_w$.
If $G$ is $\varkappa$-Fr\'{e}chet--Urysohn or Baire, then $G$ is locally finite-dimensional.
\end{theorem}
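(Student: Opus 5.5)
The plan is to apply Theorem~\ref{th:criterion} to $V=E_w$ with the original norm $\norm$ of $E$. Then condition~(a) (every ball $B(r)$ is closed in $E_w$) and condition~(b) (the family $\{B(n):n\in\omega\}$ absorbs convergent sequences in $E_w$) are exactly what Lemma~\ref{lemma:bs-as} provides. For $r=n$ the lemma states both properties directly. For a general $r>0$, closedness follows from the same Hahn--Banach description
\[
B(r)=\bigcap\{\{x:|\varphi(x)|\le r\}:\varphi\in E',\ \|\varphi\|\le1\}.
\]

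It remains to verify condition~(c), that $B(r)$ is precompact in $E_w$ for some $r>0$. Lemma~\ref{lemma:bs-as} already gives this, since norm-bounded sets are weakly bounded, and weakly bounded sets are precompact in $E_w$ (Section~\ref{Preliminaries}). With (a)--(c) in hand, Theorem~\ref{th:criterion} shows that every $\varkappa$-Fr\'echet--Urysohn or Baire subgroup of $E_w$ is locally finite-dimensional, which is the claim.

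One could also argue directly, without Theorem~\ref{th:criterion}. By Proposition~\ref{prop:bs-kfu-baire-sgs} with $\tau=w$, the topology of $G$ coincides with the norm topology induced from $E$. Hence some norm ball $B(r)\cap G$ is a neighborhood of zero in $G$. That ball is precompact in $E_w$, so $G$ is locally precompact, and Theorem~\ref{th:lfd-subgroups} then gives local finite-dimensionality.

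I expect no real obstacle. All the substantive work was done in Lemma~\ref{lemma:bs-as}, namely the Banach--Steinhaus step showing that weakly bounded sets are norm-bounded. The only point needing care is the Hausdorff hypothesis: $E_w$ is Hausdorff by Hahn--Banach, so it is a TVS in the sense used throughout the paper.
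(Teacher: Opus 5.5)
Your proof is correct and is essentially the paper's own: the paper proves this theorem by noting that, by Lemma~\ref{lemma:bs-as}, the norm of $E$ satisfies conditions (a)--(c) of Theorem~\ref{th:criterion} on $E_w$. Its primary write-up instead cites the ingredients of that theorem directly: Proposition~\ref{prop:kfu-group-loc-precomp} with Theorem~\ref{th:lfd-subgroups} in the $\varkappa$-Fr\'echet--Urysohn case, and Proposition~\ref{prop:sg-baire-tvs}(iii) in the Baire case.
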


\begin{proof}
According to Lemma~\ref{lemma:bs-as}, the family $\mathscr B=\{B(n): n\in \omega\}$ consists of closed 
precompact subsets of $E_w$ and absorbs convergent sequences in $E_w$.

If $G$ is 
$\varkappa$-Fr\'{e}chet--Urysohn, then it is locally precompact by 
Proposition~\ref{prop:kfu-group-loc-precomp} and hence locally finite-dimensional by 
Theorem~\ref{th:lfd-subgroups}. If $G$ is Baire, then it is locally finite-dimensional 
by Proposition~\ref{prop:sg-baire-tvs}(iii). 

Alternatively, the theorem follows directly from Theorem~\ref{th:criterion}, because, by 
Lemma~\ref{lemma:bs-as}, the norm of $E$ satisfies conditions (a)--(c) on $E_w$.
\end{proof}

Theorem~\ref{th:bs-kfu-baire-sgs} and Proposition~\ref{prop-new} immediately imply the 
characterization of $\varkappa$-Fr\'{e}chet--Urysohn or Baire linear subspaces of $E_w$.

\begin{corollary}
\label{cor:bs-kfu-baire-sgs}
Let $E$ be a normed space, and let $L$ be a linear subspace of $E_w$.
If $L$ is $\varkappa$-Fr\'{e}chet--Urysohn or Baire, then $L$ is finite-dimensional.
\end{corollary}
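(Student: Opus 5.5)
The plan is to reduce the corollary to Theorem~\ref{th:bs-kfu-baire-sgs} and Proposition~\ref{prop-new}, as the sentence preceding the statement indicates. Let $L$ be a linear subspace of $E$, carrying the topology induced from $E_w$. We assume $L$ is $\varkappa$-Fr\'echet--Urysohn or Baire. Its additive group is then a subgroup of $E_w$ with the same property, since both properties refer only to the topology of $L$ and not to its algebraic structure.

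First, Theorem~\ref{th:bs-kfu-baire-sgs} applies to this subgroup and shows that $L$ is locally finite-dimensional as a subgroup of the TVS $E_w$. Concretely, there is a finite-dimensional subspace $F$ of $E_w$ such that $F\cap L$ is open in $L$.

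Second, Proposition~\ref{prop-new}, applied with $V=E_w$ and $W=L$, shows that a linear subspace which is locally finite-dimensional as a subgroup is finite-dimensional. Unwinding it: $F\cap L$ is an open finite-dimensional subspace of $L$, so $L$ is locally compact, and a locally compact Hausdorff TVS is finite-dimensional. Note that $E_w$ is Hausdorff because $E'$ separates points by Hahn--Banach, so the standing assumption that TVS are Hausdorff holds.

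There is no genuine obstacle here. All the work lies in Lemma~\ref{lemma:bs-as} and Theorem~\ref{th:criterion}, which are already available. The only point requiring care is that Theorem~\ref{th:bs-kfu-baire-sgs} is stated for subgroups of $E_w$ itself, which is exactly the situation of $L$, so no passage to an ambient space other than $E_w$ is needed.

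If one preferred to bypass Theorem~\ref{th:bs-kfu-baire-sgs}, Theorem~\ref{th:criterion} gives the statement directly. By Lemma~\ref{lemma:bs-as}, the norm of $E$ satisfies conditions (a)--(c) on $E_w$, and the second assertion of Theorem~\ref{th:criterion} is precisely the corollary.
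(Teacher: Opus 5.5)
Your proposal is correct and follows the paper's route exactly: the paper derives this corollary immediately from Theorem~\ref{th:bs-kfu-baire-sgs} together with Proposition~\ref{prop-new}. Your alternative via the second assertion of Theorem~\ref{th:criterion} is also valid; the paper notes it, with the norm of $E$ satisfying (a)--(c) on $E_w$ by Lemma~\ref{lemma:bs-as}, in its proof of Theorem~\ref{th:bs-kfu-baire-sgs}.
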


\begin{remark}
\label{rem:Lw}
For an infinite-dimensional Banach space $E$, the fact that $E_w$ is not Baire was observed in \cite{Edgar1984}.
Also, the fact that $E_w$ is not Fr\'echet--Urysohn follows from  results of \cite[p.~280]{SW1991}.
\end{remark}

If $V$ is a locally convex space with a continuous norm $p$, then the $p$-balls are weakly closed and absorb  
weakly convergent sequences. However, the $p$-balls need not be weakly precompact, and Theorem~\ref{th:criterion} 
does not apply.  The following  question is open.

\begin{question}
\label{q:cont-norm}
Let $V$ be a locally convex space admitting a continuous norm. Is every $\varkappa$-Fr\'echet--Urysohn 
\textup(Fr\'echet--Urysohn, metrizable\textup) subgroup of $V_w$ locally finite-dimensional? Is every 
$\varkappa$-Fr\'echet--Urysohn linear subspace of $V_w$ finite-dimensional?
\end{question}

Next we turn our attention to  duals of Banach spaces  endowed with the weak$^*$ topology.
For a Banach space $E$, we consider $E'$ with the dual norm $\|\varphi\|=\sup\{|\varphi(x)|:\|x\|\le1\}$.
By $B(r)=\{\varphi\in E':\|\varphi\|\le r\}$ we denote the closed ball of radius $r$ in $E'$.

\begin{proposition}
\label{lemma:weakstar}
Let $E$ be a Banach space. Then
\begin{enumerate}
\item[\rm (i)] every ball $B(n)$ is compact in $E'_{w^*}$;
\item[\rm (ii)] every compact subset of $E'_{w^*}$ is norm-bounded and hence  contained in some $B(n)$.
\end{enumerate}
Thus, $E'_{w^*}$ is hemicompact with respect to the family $\{B(n): n\in\omega\}$. In particular, this family 
absorbs convergent sequences in~$E'_{w^*}$.
\end{proposition}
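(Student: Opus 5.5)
Part (i) is the Banach--Alaoglu theorem, which needs no completeness of $E$. The ball $B(n)$ consists of the linear functionals $\varphi$ with $|\varphi(x)|\le n\|x\|$ for all $x\in E$. I will embed $E'_{w^*}$ into $\R^E$ via $\varphi\mapsto(\varphi(x))_{x\in E}$; this is a homeomorphism onto its image by the definition of the weak$^*$ topology. Under this embedding, $B(n)$ is the set of points of $\prod_{x\in E}[-n\|x\|,n\|x\|]$ that are linear, i.e., that satisfy $f(x+y)=f(x)+f(y)$ and $f(tx)=tf(x)$. Each such condition involves only finitely many coordinates, so this set is closed in the product. The product is compact by Tychonoff's theorem, hence $B(n)$ is compact.

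For (ii), let $K\subset E'_{w^*}$ be compact. For each $x\in E$, the evaluation $\varphi\mapsto\varphi(x)$ is weak$^*$ continuous, so $\sup_{\varphi\in K}|\varphi(x)|<\infty$. Thus $K$ is a pointwise bounded family of continuous linear functionals on the Banach space $E$. By the Banach--Steinhaus uniform boundedness principle \cite[Theorem~3.5.4]{BSS2017tvs}, we get $\sup_{\varphi\in K}\|\varphi\|<\infty$. Hence $K\subset B(n)$ for some $n\in\omega$. This is the one step that uses completeness of $E$, in line with Example~\ref{ex:incomplete}.

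Combining (i) and (ii), $E'_{w^*}$ is hemicompact with respect to the increasing family $\{B(n):n\in\omega\}$. For the final assertion, take a sequence $x_k\to x$ in $E'_{w^*}$. The set $\{x\}\cup\{x_k:k\in\omega\}$ is compact, so by (ii) it lies in some $B(n)$, exactly as in the proof of Corollary~\ref{cor:hemicompact-group}. The only real input beyond routine checks is the uniform boundedness principle in (ii).
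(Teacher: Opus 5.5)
Your proof is correct and follows the paper's argument. You use Banach--Alaoglu for (i); for (ii), you combine pointwise boundedness of a weak$^*$ compact set with the uniform boundedness principle, which is where completeness enters; and for the final assertion, you note that a convergent sequence together with its limit is compact. The only difference is that you sketch the Tychonoff-based proof of Banach--Alaoglu, which the paper simply cites.
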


\begin{proof}
(i) This is the classical Banach--Alaoglu theorem.

(ii) Let $K\subset E'$ be weak$^*$ compact. For every $x\in E$, the map $\varphi\mapsto\varphi(x)$ is continuous 
on $E'_{w^*}$; hence the set $\{\varphi(x):\varphi\in K\}$ is compact and therefore bounded. Since $E$ is complete, 
the Banach--Steinhaus uniform boundedness principle~\cite[Theorem~3.5.4]{BSS2017tvs} applied to the family $K$ 
of continuous linear functionals on~$E$ shows that $\sup\{\|\varphi\|:\varphi\in K\}<\infty$. 

The last assertion holds because a convergent sequence together with its limit is compact.
\end{proof}

\begin{remark}
\label{rem:hemicompact-ref}
Apparently, the hemicompactness of $E'_{w^*}$ is folklore; it is stated without proof in \cite[Example~1.3(c)]{KampoukosMercourakis2013}.
As we have mentioned before, the fact that, for any infinite-dimensional Banach space $E$, the spaces $E_w$ and $E'_{w^*}$ are
not $k$-spaces is well known \cite{KM2017}.
\end{remark}

\begin{theorem}
\label{th:weakstar}
Let $E$ be a Banach space. Then every $\varkappa$-Fr\'echet--Urysohn or Baire subgroup of $E'_{w^*}$ is locally 
finite-dimensional, and every $\varkappa$-Fr\'echet--Urysohn or Baire linear subspace of $E'_{w^*}$ is finite-dimensional. 
\end{theorem}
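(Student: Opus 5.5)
The plan is to apply Theorem~\ref{th:criterion} to $V=E'_{w^*}$ equipped with the dual norm $\|\varphi\|=\sup\{|\varphi(x)|:\|x\|\le 1\}$. This requires checking conditions (a)--(c) of that theorem, and Proposition~\ref{lemma:weakstar} already contains essentially everything needed. Once the conditions are verified, Theorem~\ref{th:criterion} gives both conclusions at once: every $\varkappa$-Fr\'echet--Urysohn or Baire subgroup of $E'_{w^*}$ is locally finite-dimensional, and every such linear subspace is finite-dimensional (the latter via Proposition~\ref{prop-new}).

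The verification runs as follows.
\begin{enumerate}
\item[(a)] Each ball $B(r)$ is weak$^*$ compact by the Banach--Alaoglu theorem (Proposition~\ref{lemma:weakstar}(i), which applies to any $r>0$ after rescaling). Since $E'_{w^*}$ is Hausdorff, $B(r)$ is therefore closed. One can also see this directly, since $B(r)=\bigcap_{\|x\|\le1}\{\varphi:|\varphi(x)|\le r\}$.
\item[(b)] The family $\{B(n):n\in\omega\}$ absorbs convergent sequences. This is the last assertion of Proposition~\ref{lemma:weakstar}. A convergent sequence together with its limit is weak$^*$ compact, hence norm-bounded by part (ii), which uses the completeness of $E$ through Banach--Steinhaus.
\item[(c)] Each $B(r)$ is compact in the topological group $E'_{w^*}$, and compact subsets of a topological group are precompact.
\end{enumerate}

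As an alternative route, one can use hemicompactness of $E'_{w^*}$ with respect to the increasing family $\{B(n)\}$. Then Corollary~\ref{cor:hemicompact-group} gives local precompactness of $\varkappa$-Fr\'echet--Urysohn subgroups, and Theorem~\ref{th:lfd-subgroups} gives local finite-dimensionality. The Baire case follows from Proposition~\ref{prop:sg-baire-tvs}(iii), using (a) and (c).

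There is no real obstacle, since all the analytic content has already been isolated in Proposition~\ref{lemma:weakstar}. The only point requiring care is condition (b), because it is exactly where the completeness of $E$ is used, through the uniform boundedness principle applied to a weak$^*$ compact set. This matches the paper's remark that completeness is essential (Example~\ref{ex:incomplete}).
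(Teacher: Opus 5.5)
Your proposal is correct and matches the paper's proof, which gives both of your routes. The paper's main argument is your ``alternative'' one: hemicompactness via Corollary~\ref{cor:hemicompact-group} and Theorem~\ref{th:lfd-subgroups} for the $\varkappa$-Fr\'echet--Urysohn case, and Proposition~\ref{prop:sg-baire-tvs}(iii) for the Baire case. It then remarks that the theorem also follows from Theorem~\ref{th:criterion}, with conditions (a)--(c) checked via Proposition~\ref{lemma:weakstar}, exactly as you do.
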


\begin{proof}
Let $G$ be a $\varkappa$-Fr\'echet--Urysohn subgroup of $E'_{w^*}$. By Proposition~\ref{lemma:weakstar} and 
Corollary~\ref{cor:hemicompact-group}, the set $G\cap B(n)$ is a neighborhood of zero in $G$ for some $n$. This 
set is precompact in $G$, because $B(n)$ is compact and hence precompact in $E'_{w^*}$ (see 
Section~\ref{Preliminaries}). Therefore, $G$ is locally precompact and, by Theorem~\ref{th:lfd-subgroups}, 
locally finite-dimensional.

If $G$ is a Baire subgroup of $E'_{w^*}$, then $G$ is locally finite-dimensional by 
Proposition~\ref{prop:sg-baire-tvs}(iii), because the balls $B(r)$ are compact and hence closed and precompact 
in $E'_{w^*}$.

Alternatively, the theorem follows from Theorem~\ref{th:criterion}, because, by 
Proposition~\ref{lemma:weakstar}, the dual norm satisfies conditions (a)--(c) of that theorem.
\end{proof}

The completeness of $E$ in Theorem~\ref{th:weakstar} cannot be dropped.

\begin{example}
\label{ex:incomplete}
Let $E$ be any normed space of countably infinite \textup(Hamel\textup) dimension. If 
 $\{e_k:k\in\omega\}$ is a Hamel 
basis of $E$, then the weak$^*$ topology is generated by the countably many functionals 
$\varphi\mapsto\varphi(e_k)$, and hence $E'_{w^*}$ is an infinite-dimensional metrizable TVS.
\end{example}

\begin{remark}
\label{rem:normed-dual}
For an arbitrary normed space $E$, the balls $B(n)$ are still compact in $E'_{w^*}$ by the Banach--Alaoglu theorem, 
which holds for normed spaces; thus, conditions (a) and (c) of Theorem~\ref{th:criterion} are satisfied. Therefore, 
by Proposition~\ref{prop:sg-baire-tvs}(iii), \emph{every Baire subgroup of $E'_{w^*}$ is locally finite-dimensional 
for any normed space $E$}; only the $\varkappa$-Fr\'echet--Urysohn part of Theorem~\ref{th:weakstar} requires 
condition~(b). In Example~\ref{ex:incomplete}, the space $E'_{w^*}=\bigcup_n B(n)$ is indeed not Baire.
\end{remark}

\section{Convergent sequences in vector spaces\\ with various topologies}\label{section6}

Recall that $L_b(X)$ denotes $E(X)$ endowed with the topology generated by the functionals 
$\chi\mapsto\chi(f)$, $f\in C_b(X)$ (equivalently, with the topology induced from $L_p(\beta X)$, where $\beta X$ is the Stone--\v{C}ech compactification of $X$). 
By $\norm_\infty$ we denote the $\sup$-norm on~$C_b(X)$.

\begin{proposition}
\label{prop:lbx-bounded}
For any space $X$, the following assertions hold.
\begin{enumerate}
\item[\rm (i)]
$\|\chi\|=\sup\{|\chi(f)|: f\in C_b(X),\ \|f\|_\infty\le 1\}$ for every $\chi\in E(X)$.
\item[\rm (ii)]
For any $r\ge 0$, the ball $B_r(X)=\{\chi\in E(X):\|\chi\|\le r\}$ is closed in $L_b(X)$.
\item[\rm (iii)]
A set $B\subset L_b(X)$ is bounded in $L_b(X)$ if and only if there exists an $n\in \omega$ 
for which $B\subset B_n(X)$. 
\end{enumerate}
\end{proposition}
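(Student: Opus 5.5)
For (i), write $\chi=\sum_{i=1}^n\lambda_i x_i$ with distinct $x_i$. The inequality $|\chi(f)|\le\sum|\lambda_i||f(x_i)|\le\|\chi\|$ for $\|f\|_\infty\le1$ is immediate. For the reverse inequality, I would use that $X$ is Tychonoff. Choose pairwise disjoint open neighborhoods $U_i$ of the points $x_i$ (possible because $X$ is Hausdorff and the set is finite). Then choose continuous $g_i\colon X\to[0,1]$ with $g_i(x_i)=1$ and $g_i=0$ outside $U_i$. Put $f=\sum_i \operatorname{sign}(\lambda_i)g_i$. Since the supports are disjoint, $f\in C_b(X)$ with $\|f\|_\infty\le1$, and $\chi(f)=\sum|\lambda_i|=\|\chi\|$. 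So the supremum is in fact attained.

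For (ii), rewrite (i) as
\[
B_r(X)=\bigcap\bigl\{\{\chi:|\chi(f)|\le r\}: f\in C_b(X),\ \|f\|_\infty\le1\bigr\}.
\]
Each set in this intersection is closed in $L_b(X)$, because $\chi\mapsto\chi(f)$ is continuous by the definition of the topology. Hence $B_r(X)$ is closed.

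For (iii), one direction is easy. If $B\subset B_n(X)$, then $|\chi(f)|\le n\|f\|_\infty$ for all $\chi\in B$. So every generating functional is bounded on $B$, which is exactly boundedness in the weak-type topology of $L_b(X)$: its basic neighborhoods of zero are finite intersections of sets $\{|\chi(f)|<\varepsilon\}$.

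For the converse, mimic Lemma~\ref{lemma:bs-as}. Regard each $\chi\in E(X)$ as the continuous linear functional $f\mapsto\chi(f)$ on the Banach space $(C_b(X),\norm_\infty)$; by (i) its functional norm is exactly $\|\chi\|$. If $B$ is bounded in $L_b(X)$, then $\{\chi(f):\chi\in B\}$ is bounded for each $f\in C_b(X)$, i.e.\ this family of functionals is pointwise bounded. The Banach--Steinhaus uniform boundedness principle \cite[Theorem~3.5.4]{BSS2017tvs} then gives $\sup_{\chi\in B}\|\chi\|<\infty$, so $B\subset B_n(X)$ for some $n$.

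The only points requiring care are the completeness of $C_b(X)$ under the sup-norm, which is standard, and the separation argument in (i). I expect no serious obstacle; the key is recognizing (i) as an isometric embedding of $(E(X),\norm)$ into $C_b(X)'$.
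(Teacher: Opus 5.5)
Your proposal is correct and follows the paper's proof essentially step for step. It uses the same disjoint bump functions $g_i$ with $f=\sum_i\operatorname{sign}(\lambda_i)g_i$ for (i). For (ii) it obtains closedness of $B_r(X)$ from (i); the paper phrases this as lower semicontinuity of the norm, which amounts to your intersection of closed sets. For (iii) it uses the same isometric embedding of $(E(X),\norm)$ into $C_b(X)'$ combined with the Banach--Steinhaus principle.
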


\begin{proof}
(i) Clearly, $|\chi(f)|\le\|\chi\|\,\|f\|_\infty$. Conversely, let $\chi=\sum_{i=1}^n\lambda_ix_i$, where the 
$x_i$ are distinct and $\lambda_i\ne0$. Take pairwise disjoint open sets $O_i\ni x_i$ and continuous functions 
$g_i\colon X\to[0,1]$  such that $g_i(x_i)=1$ and $g_i=0$ outside $O_i$. Then 
$f=\sum_{i=1}^n\operatorname{sign}(\lambda_i)\,g_i\in C_b(X)$, $\|f\|_\infty\le1$, and $\chi(f)=\|\chi\|$.

(ii) By (i), the function $\chi\mapsto\|\chi\|$ is the supremum of a family of continuous functions on 
$L_b(X)$ and hence is lower semicontinuous.

(iii) By (i), the map $J\colon E(X)\to C_b(X)'$  defined by $J(\chi)(f)=\chi(f)$ 
is a linear isometry of $(E(X),\norm)$ into the dual of the Banach space $(C_b(X),\norm_\infty)$.
If $B$ is bounded in $L_b(X)$, then the set $\{\chi(f): \chi\in B\}$ is
bounded for every $f\in C_b(X)$. It follows from the Banach--Steinhaus uniform boundedness principle
\cite[Theorem~3.5.4]{BSS2017tvs} that $J(B)$ is norm-bounded in $C_b(X)'$, i.e., 
$B\subset B_n(X)$ for some $n$. 
Conversely, if $B\subset B_n(X)$, then $|\chi(f)|\le n\|f\|_\infty$ for all $\chi\in B$ and $f\in C_b(X)$; 
hence $B$ is bounded in the locally convex space $L_b(X)$.
\end{proof}

\begin{proposition}
\label{p6}
For any space $X$, the family of $B_n(X)$, $n\in \omega$, consists of closed subsets of $L_b(X)$ and 
absorbs convergent sequences in $L_b(X)$ \textup(and therefore in $L_p(X)$, $L(X)$, $V(X)$, and $E(X)$ with any 
vector space topology finer than that of $L_b(X)$\textup).
\end{proposition}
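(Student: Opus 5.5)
Closedness of each $B_n(X)$ in $L_b(X)$ is exactly Proposition~\ref{prop:lbx-bounded}(ii). For absorption, the plan is to reduce to boundedness and then apply Proposition~\ref{prop:lbx-bounded}(iii).

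Let $(\chi_k)_{k\in\omega}$ be a sequence converging in $L_b(X)$ to some $\chi$. Any convergent sequence in a TVS is bounded. Indeed, given a neighborhood $O$ of zero, pick a balanced neighborhood $O'$ with $O'+O'\subset O$. Almost all $\chi_k-\chi$ lie in $O'$. The finitely many remaining points, together with $\chi$, are absorbed by $mO'$ for some $m\ge1$. Hence all $\chi_k$ lie in $mO'+mO'\subset mO$.

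More simply, for each fixed $f\in C_b(X)$ the real sequence $\chi_k(f)$ converges to $\chi(f)$ and so is bounded. Since the topology of $L_b(X)$ is generated by these functionals, the set $\{\chi_k:k\in\omega\}$ is bounded in $L_b(X)$. Proposition~\ref{prop:lbx-bounded}(iii) then gives an $n$ with $\{\chi_k\}\subset B_n(X)$, so the family absorbs convergent sequences in $L_b(X)$. That proposition rests on the Banach--Steinhaus theorem applied to $C_b(X)$; this is the only substantive step, and it has already been established.

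For the parenthetical claim, the topologies of $L_p(X)$, $L(X)$, and $V(X)$ are all finer than that of $L_b(X)$, as recorded in Section~\ref{Preliminaries}. Remark~\ref{rem:finer} therefore transfers absorption of convergent sequences to these spaces and to $E(X)$ with any finer vector topology. Closedness transfers for the same reason, since a set closed in a coarser topology remains closed in a finer one.
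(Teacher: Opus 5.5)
Your proposal is correct and follows the paper's own proof. Closedness comes from Proposition~\ref{prop:lbx-bounded}(ii), absorption from the boundedness of convergent sequences combined with Proposition~\ref{prop:lbx-bounded}(iii), and the extension to finer topologies from Remark~\ref{rem:finer}; the only difference is that you spell out why convergent sequences are bounded, which the paper takes for granted.
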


\begin{proof}
This follows from Proposition~\ref{prop:lbx-bounded}, because any convergent sequence is bounded, and 
from Remark~\ref{rem:finer}.
\end{proof}

\begin{lemma}
\label{lemma:En-closed}
For any space $X$ and any $n\in\omega$, the set $E_n(X)$ is closed in $L_b(X)$ \textup(and hence in 
$L_p(X)$, $L(X)$, and $V(X)$\textup).
\end{lemma}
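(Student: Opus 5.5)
We show that the complement $E(X)\setminus E_n(X)$ is open in $L_b(X)$. Since the topologies of $L_p(X)$, $L(X)$ and $V(X)$ are finer than that of $L_b(X)$, the parenthetical assertion then follows at once.

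Fix $\chi\in E(X)$ with $|\supp\chi|\ge n+1$. Write
\[
\chi=\sum_{i=1}^m\lambda_i x_i,
\]
where $m\ge n+1$, the points $x_i$ are distinct and every $\lambda_i\ne0$. As in the proof of Proposition~\ref{prop:lbx-bounded}(i), choose pairwise disjoint open sets $O_i\ni x_i$. Then choose continuous functions $g_i\colon X\to[0,1]$ with $g_i(x_i)=1$ and $g_i=0$ outside $O_i$. Each $g_i$ lies in $C_b(X)$. Consider the basic neighbourhood of $\chi$ in $L_b(X)$
\[
U=\{\eta\in E(X): |\eta(g_i)-\chi(g_i)|<|\lambda_i|\ \text{for } i=1,\dots,m\}.
\]
Here $\chi(g_i)=\lambda_i$, because the sets $O_i$ are disjoint.

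We claim $U\cap E_n(X)=\es$. Suppose $\eta\in U$. Then $\eta(g_i)\ne0$ for every $i\le m$. Since $g_i$ vanishes outside $O_i$, the value $\eta(g_i)$ depends only on the part of $\eta$ supported in $O_i$, so $\supp\eta\cap O_i\ne\es$ for each $i$. The sets $O_i$ are pairwise disjoint, hence $|\supp\eta|\ge m\ge n+1$, and so $\eta\notin E_n(X)$. Therefore the complement of $E_n(X)$ is open, which proves the lemma.

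The only point needing care is the choice of test functions. They must be bounded, so that they are continuous functionals on $L_b(X)$ and not merely on $L_p(X)$. They must also be supported in disjoint neighbourhoods, so that a nonzero pairing forces a support point in each $O_i$. Bump functions with values in $[0,1]$ satisfy both requirements, and they exist because $X$ is Tychonoff.
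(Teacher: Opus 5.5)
Your proof is correct and is essentially the paper's argument. The paper likewise separates support points of $\chi$ by pairwise disjoint open sets $O_i$ with bump functions $g_i\in C_b(X)$, and uses the neighborhood $\{\eta:\eta(g_i)\neq0 \text{ for all } i\}$ of $\chi$ in $L_b(X)$, which forces $\supp\eta$ to meet each of the disjoint sets $O_i$. The differences are cosmetic: the paper uses only $n+1$ support points, so it must require $O_i\cap\supp\chi=\{x_i\}$ explicitly, whereas you use all of $\supp\chi$ and ``$|\lambda_i|$-balls'' in place of the condition $\eta(g_i)\neq0$.
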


\begin{proof}
Let $\chi\in E(X)\setminus E_n(X)$, and let $x_0,\dots,x_n$ be distinct points of $\supp\chi$. 
Take pairwise disjoint open sets $O_i\ni x_i$ such that $O_i\cap\supp\chi=\{x_i\}$ and continuous functions 
$g_i\colon X\to[0,1]$ with $g_i(x_i)=1$ and $g_i=0$ outside $O_i$, $i=0,\dots,n$. Then $\chi(g_i)\ne0$ for 
all $i$ and the set $W=\{\eta\in E(X):\eta(g_i)\ne0,\ i=0,\dots,n\}$ is an open neighborhood of $\chi$ in 
$L_b(X)$. If $\eta\in W$, then $\supp\eta$ meets each of the $n+1$ disjoint sets $O_i$, so that 
$|\supp\eta|>n$. Thus, $W\cap E_n(X)=\es$.
\end{proof}

Applying  Proposition \ref{prop:kfu-group-tvs}(i) (its conditions~(a) and~(b) hold by 
Proposition~\ref{p6}), we obtain the following corollary. 

\begin{corollary}
\label{c3}
Let $X$ be any topological space, and let $V$ be $E(X)$ endowed with any vector space topology finer than that 
of $L_b(X)$. Then the topology of any $\varkappa$-Fr\'echet--Urysohn subgroup of $V$ is finer than that 
induced by the norm~$\norm$. 
\end{corollary}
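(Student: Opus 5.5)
The plan is to apply Proposition~\ref{prop:kfu-group-tvs}(i) directly, with the norm $\norm$ on $E(X)$ (the $\ell_1$-type norm $\|\chi\|=\sum|\lambda_i|$) playing the role of the auxiliary norm. Its balls $B(r)$ are exactly the sets $B_r(X)=\{\chi\in E(X):\|\chi\|\le r\}$ of Proposition~\ref{prop:lbx-bounded}. So the only task is to verify hypotheses (a) and (b) of Proposition~\ref{prop:kfu-group-tvs} for the space $V$, which is $E(X)$ with a vector space topology $\tau$ finer than that of $L_b(X)$.

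For (a), Proposition~\ref{prop:lbx-bounded}(ii) shows that every ball $B_r(X)$, $r\ge 0$, is closed in $L_b(X)$. Since $\tau$ is finer than the topology of $L_b(X)$, each $B_r(X)$ remains closed in $V$.

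For (b), Proposition~\ref{p6} shows that the family $\{B_n(X):n\in\omega\}$ absorbs convergent sequences in $L_b(X)$. By Remark~\ref{rem:finer}, it then absorbs convergent sequences in the finer topology $\tau$, because a sequence convergent in $V$ is convergent in $L_b(X)$. The underlying reason is Proposition~\ref{prop:lbx-bounded}(iii): a convergent sequence is bounded in $L_b(X)$, hence norm-bounded.

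With (a) and (b) established, Proposition~\ref{prop:kfu-group-tvs}(i) applied to any $\varkappa$-Fr\'echet--Urysohn subgroup $G$ of $V$ gives that the topology of $G$ is finer than the topology generated by $\norm$. That is precisely the assertion of the corollary.

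There is no real obstacle here. The substantive work is the Banach--Steinhaus argument in Proposition~\ref{prop:lbx-bounded} and the strong $\varkappa$-Fr\'echet--Urysohn property (Theorem~\ref{t1}) behind Proposition~\ref{prop:kfu-group-as}, and both are already available. The only point needing care is the direction of monotonicity: closedness of a set and convergence of sequences both pass from a coarser topology to a finer one, in the directions used above.
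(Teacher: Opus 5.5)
Your proposal is correct and matches the paper's argument: apply Proposition~\ref{prop:kfu-group-tvs}(i) to the norm $\norm$ on $E(X)$. Conditions (a) and (b) come from Proposition~\ref{p6} (equivalently Proposition~\ref{prop:lbx-bounded}(ii),(iii)) together with Remark~\ref{rem:finer}, which passes both properties to any finer vector space topology.
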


\begin{proposition}
\label{p7}
For any topological space $X$, the family $\{E_n(X): n\in \omega\}$ absorbs convergent sequences in $V(X)$.
\end{proposition}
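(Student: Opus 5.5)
The plan is to argue by contradiction. Suppose $\chi_k\to\chi$ in $V(X)$ but $\{\chi_k:k\in\omega\}\cup\{\chi\}$ lies in no $E_n(X)$. After passing to a subsequence we may assume $|\supp\chi_k|\ge k$ for all $k$. By Proposition~\ref{p6} all $\chi_k$ lie in some $B_m(X)$, so $\|\chi_k\|\le m$.

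The first step is to isolate many "new" support points. Apply Corollary~\ref{cor:Delta-system} to suitable bounded-size pieces of the supports $\supp\chi_k$, for instance the sets of points carrying the $j$ largest coefficients, for increasing $j$. This should give a root $R$ and an infinite subsequence whose supports, outside $R\cup\supp\chi$, contain pairwise disjoint finite sets $P_k$ with $|P_k|\to\infty$; by Remark~\ref{rem:delta} these sets are nonempty. Next, use that $X$ is Tychonoff to choose open sets $O_x\ni x$ for $x\in\bigcup_kP_k$ which meet no other point of $\bigcup_k\supp\chi_k\cup\supp\chi$. Thinning further, we want $\{O_x\}$ to be a discrete family. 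If this fails, the family is not discrete, and we would instead use the lower semicontinuity from Lemma~\ref{lemma:En-closed}, which controls supports near limit points. Either way we obtain continuous functions $g_x\colon X\to[0,1]$ with $g_x(x)=1$, supported in $O_x$, and with $\sum_x g_x$ continuous.

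The key step is to exploit that $V(X)$ is the free \emph{topological vector space}, not merely the free locally convex space. In the locally convex case, averages of many points can converge, so support size cannot be controlled there; non-local convexity is essential. I would map $X$ continuously into a non-locally-convex F-space $T$, such as $L^0$ of a measure space with convergence in measure. The map is $\Phi(x)=\sum_x g_x(\cdot)\,\mathbf 1_{A_x}$, where the $A_x$ are disjoint sets of comparable measure, normalised within each block $P_k$ so that the blocks have total measure bounded below. The induced linear operator $\widehat\Phi\colon V(X)\to T$ then sends $\chi_k$ to a function which is nonzero on $\bigcup_{x\in P_k}A_x$. This function therefore stays at fixed distance from $\widehat\Phi(\chi)$, because the measure of the support does not shrink under small coefficients in $L^0$. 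This contradicts the continuity of $\widehat\Phi$.

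I expect the main obstacle to be this last construction. One difficulty is the choice of the measures of the sets $A_x$: they must be small enough that $\Phi$ is continuous into $T$, yet have block sums bounded below. The other difficulty is handling coefficients of $\chi_k$ on $P_k$ that tend to zero, which is exactly where convergence in measure helps and local convexity would fail. If the $L^0$ approach breaks down, I would try a sum of $\ell_{p_j}$ spaces with $p_j\to0$ instead.
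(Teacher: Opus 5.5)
\textbf{Verdict: genuine gap.} Your reduction to a subsequence with $|\supp\chi_k|\to\infty$ is fine. Your insight that non-local convexity is essential is also correct: in $L(X)$, averages $\frac1k\sum_{i=k}^{2k-1}x_i$ along a convergent sequence converge. But the step you rely on is false as stated. $L^0$ with convergence in measure is a TVS, so $\varepsilon\mathbf{1}_A\to0$ as $\varepsilon\to0$, whatever $\mu(A)$ is; that "the measure of the support does not shrink" is irrelevant. Concretely, take $\chi=0$, $\supp\chi_k=P_k$, and all coefficients equal to $k^{-2}$. Your $\widehat\Phi$ sends $\chi_k$ to $k^{-2}\mathbf{1}_{\bigcup_{x\in P_k}A_x}$, a null sequence, so it cannot witness non-convergence. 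What $L^0$ actually buys is that \emph{large multipliers} are harmless: $\int\min(|c\,\mathbf{1}_A|,1)\,d\mu\le\mu(A)$ for every $c$. The missing idea is therefore to build the reciprocals $1/\lambda_x$ of the coefficients of $\chi_k$ into $\Phi$ itself.

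\textbf{Second gap: the neighbourhood step cannot be carried out.} If $X$ is compact, there is no infinite discrete family of nonempty open sets. Moreover, $\sum_x g_x$ with $g_x(x)=1$ and disjoint cozero sets is never continuous at an accumulation point of the $x$'s. Nor can you make $O_x$ miss every other point of $S=\bigcup_k\supp\chi_k\cup\supp\chi$, since $S$ may have no isolated points. Lemma~\ref{lemma:En-closed} says nothing about any of this. Without that control, once the multipliers $1/\lambda_{x'}$ are large, points of $\supp\chi_k$ lying in $O_{x'}$ for $x'$ in other blocks create cross terms in a single $L^0[0,1]$. These can cancel the block-$k$ contribution.

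\textbf{How to repair it.} Give each block its own coordinate.
\begin{enumerate}
\item Choose $k_j$ with $|Q_j|\ge j$, where $Q_j=\supp\chi_{k_j}\setminus\supp\chi$.
\item Partition $[0,1]$ into sets $A_x$, $x\in Q_j$, each of measure $1/|Q_j|$.
\item Pick $g_x$, $x\in Q_j$, with $g_x(x)=1$ and pairwise disjoint cozero sets, each meeting $\supp\chi_{k_j}\cup\supp\chi$ only at $x$. This is trivial, since only finitely many points are involved.
\item Let the $j$th term of $\Phi(y)$ be $\sum_{x\in Q_j}\lambda_x^{-1}g_x(y)\mathbf{1}_{A_x}$. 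Work in the space of sequences $f=(f_j)$ in $L^0[0,1]$ with $\rho(\lambda f)\to0$ as $\lambda\to0$, where $\rho(f)=\sup_j\int_0^1\min(|f_j|,1)\,dt$ is an F-norm.
\end{enumerate}
Then $\int_0^1\min(|\Phi(y)_j|,1)\,dt\le1/|Q_j|\le1/j$, so $\Phi$ is the uniform limit of its finite truncations and hence continuous. Meanwhile the $j$th term of $\widehat\Phi(\chi_{k_j}-\chi)$ is $\mathbf{1}_{[0,1]}$, so $\rho\bigl(\widehat\Phi(\chi_{k_j})-\widehat\Phi(\chi)\bigr)\ge1$, which is the contradiction. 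No $\Delta$-system, no disjointness across blocks and no appeal to Proposition~\ref{p6} is needed.

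\textbf{Comparison with the paper.} Repaired this way, your argument is a self-contained alternative to the paper's proof. The paper instead maps $V(X)$ continuously into $V(\beta X)$. It then quotes the Gabriyelyan--Morris theorem that $V(\beta X)$ is the inductive limit of the compact sets $E_n(\beta X)\cap B_n(\beta X)$, and finishes with Lemma~\ref{lemma:ind-limit}.
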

\begin{proof}
Let $\xi=(\chi_n)_{n\in\omega}$ be a convergent sequence in $V(X)$, and let $V(X,\beta X)$ denote the subspace 
of $V(\beta X)$ generated by $X$; clearly, $V(X,\beta X)$ and $V(X)$ coincide as sets, the topology of $V(X)$ 
is finer than that of $V(X,\beta X)$, and $\xi$ remains a convergent 
sequence in $V(X,\beta X)$.
We set 
$$
F_n=E_n(\beta X)\cap B_n(\beta X)=\Bigl\{\sum_{i=1}^n\lambda_i x_i: x_i\in \beta X, 
\lambda_i\in \mathbb R, \sum_{i=1}^n|\lambda_i|\le n\Bigr\}
$$
for $n\in \om$ (some of the $\lambda_i$ may vanish). 
Note that each $F_n$ is a continuous image of the space 
$$
(\beta X)^n\times K,\quad\text{where }K=\{(\lambda_1, \dots, \lambda_n)\in \mathbb R^n: |\lambda_1|+\dots+|\lambda_n|\le n\}.
$$
Since $K$ is a closed subset of the compact space $[-n,n]^n$, it follows that every $F_n$ is compact. 
Note also that $F_{n}\subset F_{n+1}$ and 
$V(\beta X)$ is the inductive limit of the sequence $(F_n)_{n\in\om}$ 
\cite[Theorem~3.1]{GM2017ftvs}. By Lemma~\ref{lemma:ind-limit} and Remark~\ref{rem:finer}, 
$\xi\subset F_n\cap E(X)\subset E_n(X)$ for some~$n\in\omega$.
\end{proof}

\section{Subgroups of free topological vector spaces}\label{section7}

\begin{lemma}\label{lemma:lfd}
Let $\tau$ be any vector space topology on $E(X)$. Then a subgroup $G$ of $(E(X), \tau)$ is locally 
finite-dimensional if and only if $G\cap E_n(X)$ has nonempty interior in $G$ for some $n\in \omega$. 
\end{lemma}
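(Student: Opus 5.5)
For the forward direction, suppose $G$ is locally finite-dimensional, with $E$ a finite-dimensional subspace of $(E(X),\tau)$ such that $G\cap E$ is open in $G$. As noted in Section~\ref{Preliminaries}, $E\subset E(M)$ for some finite $M\subset X$. With $n=|M|$ we get $E\subset E_n(X)$. Hence $G\cap E_n(X)\supset G\cap E$, which is open in $G$ and contains $0$, so its interior in $G$ is nonempty.

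For the converse, let $U$ be a nonempty open subset of $G$ contained in $E_n(X)$. Pick $g\in U$; then $W=U-g$ is an open neighborhood of zero in $G$ with $W\subset E_n(X)-g\subset E_{2n}(X)$. Set $m=2n$. The plan is to find a finite set $M\subset X$ and a neighborhood $W'$ of zero in $G$ with $W'\subset E(M)$. Then $G\cap E(M)$ is a subgroup of $G$ with nonempty interior, hence open in $G$ by \cite[Corollary~1.3.3]{AT}. Since $E(M)$ is finite-dimensional, this gives local finite-dimensionality.

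To find $M$, I will use the group structure. Choose a neighborhood $W_1$ of zero in $G$ with $W_1+W_1+\dots+W_1$ ($k$ summands) contained in $W$, where $k=m+1$. I claim that $\supp(W_1)$ is finite, which yields $W_1\subset E(\supp W_1)$.

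The claim is the main step. Suppose $\supp(W_1)$ is infinite. Then I can select $\chi_1,\dots,\chi_k\in W_1$ inductively so that each $\chi_j$ has a support point $x_j$ outside $\supp\chi_1\cup\dots\cup\supp\chi_{j-1}$. This is possible because finitely many elements have finite total support, while $\supp W_1$ is infinite. The sum $\chi_1+\dots+\chi_k$ lies in $W\subset E_m(X)$, but cancellation could shrink its support. To control this, I will use the freedom of choosing signs or multiples. Since $W_1$ is symmetric (shrink it if necessary), I can replace $\chi_j$ by $\pm\chi_j$. A cleaner route is the following: for each of the $2^k$ sign choices $\varepsilon$, the sum $\sum\varepsilon_j\chi_j$ lies in $E_m(X)$. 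The coefficient of $x_j$ in $\sum\varepsilon_j\chi_j$ equals $\varepsilon_j c_j+\sum_{i>j}\varepsilon_i\chi_i(x_j)$, where $c_j\neq0$ is the coefficient of $x_j$ in $\chi_j$, because the $\chi_i$ with $i<j$ vanish at $x_j$. Fix $\varepsilon_k,\varepsilon_{k-1},\dots$ downward: at step $j$, the two choices of $\varepsilon_j$ give two values differing by $2c_j\ne0$, so at least one of them is nonzero. This yields signs for which all $k=m+1$ coefficients at the distinct points $x_1,\dots,x_k$ are nonzero, so the sum lies outside $E_m(X)$, a contradiction.

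I expect this combinatorial step to be the only real obstacle, and the sign-choice trick resolves it. Note that it does not need the $\Delta$-system lemma or any topology beyond the continuity of addition and symmetry of neighborhoods.
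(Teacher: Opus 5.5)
Your proof is correct, but it reaches the key step of the converse direction by a different route from the paper's.

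\textbf{The paper's route.} It takes a neighborhood $U$ of zero whose $(2n+1)$-fold sum lies in $E_{2n}(X)$ and assumes $\supp U$ is infinite. It then applies the $\Delta$-system lemma (Corollary~\ref{cor:Delta-system} with Remark~\ref{rem:delta}) to the family $\{\supp\chi:\chi\in U\}$, whose members have at most $2n$ points. This yields $\chi_1,\dots,\chi_{2n+1}\in U$, each with a \emph{private} support point $x_i\notin\bigcup_{j\ne i}\supp\chi_j$. Then no cancellation can occur at any $x_i$, so the plain sum $\chi_1+\dots+\chi_{2n+1}$ already has more than $2n$ support points.

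\textbf{Your route.} A greedy selection gives only the weaker triangular condition $x_j\notin\supp\chi_i$ for $i<j$. You compensate for possible cancellation from later summands by choosing signs $\varepsilon_j$ downward from $j=k$. The symmetry of $W_1$ keeps $\sum\varepsilon_j\chi_j$ inside the $k$-fold sum. The coefficient at $x_j$ is unaffected by the later choices $\varepsilon_{j'}$ with $j'<j$, so the induction closes. The triangular condition also makes the $x_j$ automatically distinct.

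\textbf{Comparison.} Your argument is more elementary and self-contained. It needs no $\Delta$-system lemma and does not even use the bound $|\supp\chi|\le 2n$ on elements of $W_1$, only that supports are finite. The price is the use of symmetric neighborhoods (free in a group) and a short sign induction. The paper's configuration, by contrast, rules out cancellation for the unsigned sum of elements of an arbitrary, not necessarily symmetric, set with uniformly bounded supports. The closing step is the same in substance: you use the open subgroup $G\cap E(\supp W_1)$, the paper uses $\langle U\rangle\subset E(S)$.
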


\begin{proof}
$(\Rightarrow)$
Suppose that $G$ is locally finite-dimensional. Then $G\cap L$ is open in $G$ for some finite-dimensional 
subspace $L\subset E(X)$. We have  $L\subset E(M)$ for a finite $M\subset X$. Let $n=|M|$. 
Then $G\cap L\subset E_{n}(X)$ and, consequently, $G\cap E_n(X)$ has nonempty interior in~$G$.

$(\Leftarrow)$
Suppose that $G\cap E_n(X)$ has nonempty interior in $G$ for some $n\in \omega$.
If $g+O\subset E_n(X)$, where $g\in G$ and $O$ is a neighborhood of zero in $G$, then 
$O\subset E_n(X)-g\subset E_{2n}(X)$. Thus, $G\cap E_{2n}(X)$ is a neighborhood of zero in $G$. Let $U\subset G$ be a neighborhood of zero of $G$ such 
that $U+\dots+U\subset E_{2n}(X)$ ($2n+1$ summands).

The subgroup $\langle U\rangle$ is open in $G$, and $\langle U\rangle\subset E(S)$, where $S=\supp U$. Therefore, it suffices to 
prove that $S$ is finite: then $E(S)$ is a finite-dimensional subspace and $G\cap E(S)\supset\langle U\rangle$ is open 
in~$G$. Suppose that $S$ is infinite. 
Then the following claim holds.

\smallskip
\noindent
{\bf Claim 1.} 
{\it
 There exist $\chi_i \in U$ and $x_i\in X$, $i=1,\dots,2n+1$, such that
$x_i\in \supp \chi_i \setminus \bigcup_{j\ne i} \supp\chi_j$ for each~$i$. 
}

\begin{proof}[Proof of Claim 1]
Since $S$ is infinite and all supports are finite, the family $\{\supp \chi:\chi\in U\}$ is 
infinite; each of its members  has at most $2n$ elements.  
Corollary~\ref{cor:Delta-system} and Remark~\ref{rem:delta}  imply the 
existence of a $Y\subset X$ and a family $\{\chi_i:i\in \om\}\subset U$ such that the sequence $\{\supp\chi_i:i\in \om\}$ consists of distinct 
elements, $\supp\chi_i\setminus Y\ne\es$, and $\supp\chi_i\cap \supp\chi_j=Y$ for distinct $i,j\in \om$. We take 
$x_i\in \supp \chi_i \setminus Y$ for $i=1,\dots,2n+1$. 
\end{proof}

We set 
\[
\chi = \sum_{i=1}^{2n+1} \chi_i.
\]
Note that $\{x_1, \dots, x_{2n+1}\}\subset \supp \chi$. Indeed, 
suppose that $x_i\notin \supp \chi$. Then $\chi\in E(X\setminus \{x_i\})$. Since 
$x_i\notin \supp \chi_j$ for $j\ne i$, we also have $\chi_j\in E(X\setminus \{x_i\})$ for 
$j\ne i$. Therefore, 
$$
\chi_i=\chi-\sum_{j\ne i}\chi_j \in E(X\setminus \{x_i\}), 
$$
which contradicts the fact that $x_i\in \supp \chi_i$. 

Thus, $|\supp \chi|>2n$ and, consequently, $\chi\notin E_{2n}(X)$. On the other hand, 
$\chi\in U+\dots+U \subset E_{2n}(X)$. This contradiction proves the lemma. 
\end{proof}

\begin{theorem}\label{th:bsg-lfd}
Let $\tau$ be a vector space topology on $E(X)$ finer than the topology of $L_b(X)$. Then any Baire subgroup 
$G$ of $(E(X), \tau)$ is locally finite-dimensional. 
\end{theorem}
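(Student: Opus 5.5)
The plan is to reduce the statement to Lemma~\ref{lemma:lfd}, using the Baire property twice: first with respect to the norm balls $B_n(X)$, then with respect to the sets $E_n(X)$. By Lemma~\ref{lemma:lfd}, it suffices to find an $n\in\omega$ such that $G\cap E_n(X)$ has nonempty interior in $G$. The obvious first attempt is to cover $G$ directly by the sets $G\cap E_n(X)$, $n\in\omega$. Each of these is closed in $G$ by Lemma~\ref{lemma:En-closed}, since $\tau$ is finer than the topology of $L_b(X)$. Together they cover $G$, because every element of $E(X)$ has finite support.

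Since $G$ is Baire, one of these closed sets must have nonempty interior in $G$. Lemma~\ref{lemma:lfd} then gives that $G$ is locally finite-dimensional, which would complete the proof.

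The only point to check is that a Baire space cannot be a countable union of closed nowhere dense subsets. This is immediate from the definition: the complements of such sets would form countably many dense open sets whose intersection is empty. If $G$ is empty the statement is vacuous; in any case $G$ contains $0$, so it is nonempty and the argument applies.

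I expect essentially no obstacle here. All the combinatorial work (the $\Delta$-system argument) is already contained in Lemma~\ref{lemma:lfd}. As a consequence, the norm balls $B_n(X)$ and Proposition~\ref{prop:sg-baire-tvs} are not needed for this route. The one point to double-check is that the closedness of $E_n(X)$ in $L_b(X)$ transfers to $(E(X),\tau)$ and then to the subspace $G$. This holds because $\tau$ is finer and closedness passes to subspaces.
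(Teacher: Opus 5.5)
Your argument is correct and is essentially the paper's proof: the sets $G\cap E_n(X)$ are closed in $G$ by Lemma~\ref{lemma:En-closed} (since $\tau$ is finer than the topology of $L_b(X)$) and cover $G$, so the Baire property yields one with nonempty interior in $G$, and Lemma~\ref{lemma:lfd} finishes. Your opening remark about applying the Baire property a first time to the balls $B_n(X)$ is never used, as you note yourself later, and should simply be deleted.
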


\begin{proof}
The sets $E_n(X)$ are closed in $L_b(X)$ (by Lemma~\ref{lemma:En-closed}) and hence in $(E(X), \tau)$.
Since $G$ is Baire and $G=\bigcup_{n\in\omega} E_n(X)\cap G$, it follows that the interior of $E_n(X)\cap G$ 
in $G$ is nonempty for some~$n$.
Lemma~\ref{lemma:lfd} implies that $G$ is locally finite-dimensional.
\end{proof}

\begin{theorem}
\label{th:B}
Any Baire subgroup of any of the spaces $L_b(X)$, $L_p(X)$, $L(X)$, and $V(X)$ is locally finite-dimensional.
\end{theorem}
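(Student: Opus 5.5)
Theorem~\ref{th:B} is a direct specialization of Theorem~\ref{th:bsg-lfd}. The plan is to check that each of the four topologies is a vector space topology on $E(X)$ finer than (or equal to) the topology of $L_b(X)$, and then invoke Theorem~\ref{th:bsg-lfd}.

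First, for $L_b(X)$ itself, the topology is generated by the linear functionals $\chi\mapsto\chi(f)$, $f\in C_b(X)$. It is therefore a locally convex vector space topology, and trivially it is finer than itself. The statement of Theorem~\ref{th:bsg-lfd} says ``finer'' in the non-strict sense, and its proof only uses that the sets $E_n(X)$ are closed in $(E(X),\tau)$. For $\tau$ equal to the $L_b(X)$ topology, this closedness is exactly Lemma~\ref{lemma:En-closed}.

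Second, recall from Section~\ref{Preliminaries} the chain of comparisons: the topology of $V(X)$ is finer than that of $L(X)$, which is finer than that of $L_p(X)$, which is finer than that of $L_b(X)$. The last step holds because $C_b(X)\subset C(X)$, so every functional defining $L_b(X)$ is among those defining $L_p(X)$. All four are vector space topologies: $V(X)$ is a TVS, $L(X)$ is locally convex, and $L_p(X)$ and $L_b(X)$ are weak topologies given by families of linear functionals. They are Hausdorff because the functionals separate points, as shown in the proof of Proposition~\ref{prop:lbx-bounded}(i).

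Applying Theorem~\ref{th:bsg-lfd} with $\tau$ each of these four topologies yields that every Baire subgroup is locally finite-dimensional. There is no real obstacle here; the only point requiring care is the non-strict reading of ``finer'' for the case $L_b(X)$, which is covered as noted above.
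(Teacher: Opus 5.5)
Your proposal is correct and follows the paper's route: Theorem~\ref{th:B} is stated immediately after Theorem~\ref{th:bsg-lfd} as its direct specialization. It uses the chain of topologies from $V(X)$ down to $L_b(X)$, with ``finer'' read non-strictly so that $L_b(X)$ itself is covered via Lemma~\ref{lemma:En-closed}.
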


Proposition~\ref{prop-new} immediately implies  a similar description of Baire linear subspaces. 

\begin{corollary}
\label{cor:B}
Any Baire linear subspace of any of the spaces $L_b(X)$, $L_p(X)$, $L(X)$, and $V(X)$ is finite-dimensional.
\end{corollary}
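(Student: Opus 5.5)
The statement is Corollary~\ref{cor:B}: any Baire linear subspace of $L_b(X)$, $L_p(X)$, $L(X)$, or $V(X)$ is finite-dimensional. The plan is to combine Theorem~\ref{th:B} with Proposition~\ref{prop-new}.

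Let $W$ be a linear subspace of one of these spaces, call it $V$, and suppose $W$ is Baire in the induced topology. First, $W$ is in particular a subgroup of the additive group of $V$. Second, each of the four topologies is a vector space topology on $E(X)$ that is finer than or equal to the topology of $L_b(X)$, since $L_b(X)\le L_p(X)\le L(X)\le V(X)$, as recalled in Section~\ref{Preliminaries}. Hence Theorem~\ref{th:B}, which is the special case of Theorem~\ref{th:bsg-lfd} for these topologies, applies. It shows that $W$ is a locally finite-dimensional subgroup of $V$: there is a finite-dimensional subspace $L\subset V$ with $W\cap L$ open in $W$.

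Third, Proposition~\ref{prop-new} says that a linear subspace of a TVS which is locally finite-dimensional as a subgroup is itself finite-dimensional. Applying it to $W\subset V$ finishes the argument.

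There is no real obstacle. The only points to check are that $W$ with its induced topology is a Baire subgroup in the sense required by Theorem~\ref{th:B}, which is immediate, and that the finer-than-$L_b(X)$ hypothesis holds for each of the four spaces, which was recorded earlier. The substantive content lies in Lemma~\ref{lemma:lfd}, Lemma~\ref{lemma:En-closed}, and the fact that a locally compact TVS is finite-dimensional, all of which are already available.
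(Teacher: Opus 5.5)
Your proposal is correct and is the paper's own argument: Theorem~\ref{th:B} shows that a Baire linear subspace, viewed as a subgroup, is locally finite-dimensional. Proposition~\ref{prop-new} then forces that subspace to be finite-dimensional.
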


The following crucial lemma can be deduced from the characterization of bounded subsets of $L_p(X)$ given in 
\cite[Proposition 2.1]{BG2022flvs}. That characterization, in turn, follows from the characterization of 
bounded subsets of $L(X)$ given in \cite[Proposition 4.10]{G2019mapg}: since $L_p(X)$ is $L(X)$ with the weak 
topology, the bounded subsets of $L(X)$ and $L_p(X)$ coincide~\cite[Theorem 1.11.15]{BSS2017tvs}.

For the reader's convenience, we present a direct proof of the lemma.

\begin{lemma}
\label{lemma:lpx-bounded}
For any space $X$ and any bounded set $B\subset L_p(X)$, 
the set $\supp B=\bigcup_{\chi\in B} \supp \chi$ is functionally bounded 
in~$X$. 
\end{lemma}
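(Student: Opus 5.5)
We argue by contradiction, using the characterization of functionally bounded sets recalled in Section~\ref{Preliminaries}. Suppose $\supp B$ is not functionally bounded in $X$. Then there is a countable discrete family $\{V_n:n\in\omega\}$ of open subsets of $X$ such that infinitely many $V_n$ meet $\supp B$. Passing to a subfamily and reindexing, we may assume that $V_n\cap\supp B\ne\es$ for every $n$. For each $n$ we choose $\chi_n\in B$ and a point $x_n\in\supp\chi_n\cap V_n$. The aim is to build one function $f\in C(X)$ for which $\{\chi(f):\chi\in B\}$ is unbounded. This contradicts the boundedness of $B$ in $L_p(X)$, since $\chi\mapsto\chi(f)$ is a continuous linear functional on $L_p(X)$.

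The construction is inductive, and the point is to keep each $V_k$ in use only finitely often. We will select indices $n_0<n_1<\cdots$ and continuous functions $g_k\colon X\to[0,1]$. Each $g_k$ will satisfy $g_k(x_{n_k})=1$, vanish outside a neighborhood $W_k\subset V_{n_k}$ of $x_{n_k}$, and vanish at every point of $\bigcup_{j<k}\supp\chi_{n_j}\setminus\{x_{n_k}\}$. We also require $W_k\cap\supp\chi_{n_j}=\{x_{n_k}\}$ for $j\le k$ whenever $x_{n_k}$ lies in that support; this is possible because all supports are finite and $X$ is Tychonoff. We then set
\[
f=\sum_k c_k g_k
\]
with real coefficients $c_k$ still to be chosen. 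Because $\{V_n\}$ is discrete, every point has a neighborhood meeting at most one $W_k$. Hence $f$ is locally equal to a single term $c_kg_k$, so $f$ is continuous for any choice of the $c_k$. This is exactly where discreteness enters.

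The coefficients are chosen recursively so that $|\chi_{n_k}(f)|\ge k$. The set $\supp\chi_{n_k}$ is finite, so by discreteness it meets only finitely many $V_m$. We therefore choose $n_k$ larger than every index $m$ for which $V_m$ meets $\supp\chi_{n_j}$ for some $j<k$. With this choice, the later bumps $g_l$, $l>k$, vanish on $\supp\chi_{n_k}$, so they do not change $\chi_{n_k}(f)$. The value $\chi_{n_k}(f)$ then depends only on $c_0,\dots,c_k$. The coefficient of $c_k$ in it is the coefficient of $x_{n_k}$ in $\chi_{n_k}$, which is nonzero. Given $c_0,\dots,c_{k-1}$, we can therefore pick $c_k$ so that $|\chi_{n_k}(f)|\ge k$.

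The main obstacle is the bookkeeping in this recursion. We must ensure that $g_k$ does not affect the earlier values $\chi_{n_j}(f)$ for $j<k$, which the vanishing condition on the earlier supports provides. We must also ensure that later bumps do not affect $\chi_{n_k}(f)$, which the choice of $n_k$ provides, since $\supp\chi_{n_k}$ is disjoint from $V_{n_l}$ for $l>k$. Once both checks are in place, $f\in C(X)$ and $\sup_{\chi\in B}|\chi(f)|=\infty$, the required contradiction.
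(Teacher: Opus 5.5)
Your proof is correct and follows essentially the same route as the paper's. Both arguments pick elements of $B$ whose supports meet distinct members of a discrete family, thin the sequence out so that each later member misses all earlier supports, place bump functions inside these members, and choose the coefficients of the locally finite sum recursively so that $\chi\mapsto\chi(f)$ is unbounded on $B$. (Your two ``bookkeeping checks'' are really one condition, namely that $g_l$ vanishes on $\supp\chi_{n_k}$ for $l>k$, and the extra requirement that $g_k$ vanish on earlier supports already follows from your choice of $n_k$, just as in the paper.)
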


\begin{proof}
Suppose that $\supp B$ is not functionally bounded. Then there exists a discrete family $\{U_n:n\in\om\}$ of open sets such that 
$U_n\cap \supp B\neq \es$ for $n\in \om$. For each $n\in\om$, take $\chi_n\in B$ such that $\supp \chi_n\cap 
U_n\neq\es$. Since every $\supp\chi_n$ is finite and meets only finitely many members of the discrete family 
$\{U_n\}$, there exists a subsequence $(\chi_{n_k})_{k\in\om}$ such that 
$U_{n_k} \cap \supp \chi_{n_i}= \es$ for $i<k<\om$; clearly, the $\chi_{n_k}$ are pairwise 
distinct.

For each $k\in \om$, we set $\xi_k=\chi_{n_k}$ and choose $x_k\in \supp \xi_k$ and a neighborhood $V_k$ of $x_k$ 
so that $x_k\in V_k\subset U_{n_k}$ and $\{x_k\}=V_k\cap \supp \xi_k$. 
Let $f_k\colon X\to [0,1]$ be a continuous function such that $f_k(x_k)=1$ and $f_k^{-1}((0,1])\subset V_k$. 
Then $\xi_k(f_k)\neq 0$ and $V_k\cap \supp \xi_i=\es$ for $i<k$; hence $\xi_i(f_k)=0$ for $i<k$.

We define $\lambda_k\in \R$ by induction on $k \in \om$ as follows. 
We set $\lambda_0=1$. Assuming that $k>0$ and $\lambda_0, \dots, \lambda_{k-1}$ are already defined, 
we consider $c=\xi_k\bigl(\sum_{i=0}^{k-1}\lambda_i f_i\bigr)$ and choose $\lambda_k\in \R$ so that 
$\lambda_k\xi_k(f_k)+c>k$; it exists because $\xi_k(f_k)\neq 0$. 

We have 
\[
\xi_k\biggl(\sum_{i=0}^{k}\lambda_i f_i\biggr)>k.
\]
Let $f=\sum_{i=0}^{\infty}\lambda_i f_i$. Since the family $\{V_i\}$ is discrete, this sum is locally 
finite; therefore, the function $f$ is continuous. Since $\xi_k(f_i)=0$ for $i>k$, we have 
$\xi_k(f)=\xi_k\bigl(\sum_{i=0}^{k}\lambda_i f_i\bigr)>k$ for $k\in\om$.
This contradicts the boundedness of the set~$B$.
\end{proof}

\begin{theorem}
\label{t4}
Let $\tau$ be a vector space topology on $E(X)$ between the topologies of $L_p(X)$ and $L(X)$. Then any  
$\varkappa$-Fr\'echet--Urysohn subgroup of $(E(X), \tau)$ is locally finite-dimensional. 
\end{theorem}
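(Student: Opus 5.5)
The plan is to reduce the statement to local precompactness and then apply Theorem~\ref{th:lfd-subgroups}. Let $G$ be a $\varkappa$-Fr\'echet--Urysohn subgroup of $V=(E(X),\tau)$. Since $\tau$ is finer than the topology of $L_p(X)$, and hence of $L_b(X)$, Proposition~\ref{p6} shows that $\{B_n(X):n\in\omega\}$ consists of closed sets and absorbs convergent sequences in $V$. Proposition~\ref{prop:kfu-group-as}, applied to the increasing family $\{G\cap B_n(X)\}$, then gives an $n$ such that $U=G\cap B_n(X)$ is a neighborhood of zero in $G$. By Corollary~\ref{c3}, the topology of $G$ is also finer than the $\ell_1$-norm topology. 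The remaining task is to find a neighborhood of zero in $G$ that is precompact in $V$.

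\textbf{Step 1 (precompactness from a functionally bounded support).} Suppose $W\subset U$ is a neighborhood of zero in $G$ with $A=\supp W$ functionally bounded in $X$. Then $W\subset n\cdot\operatorname{absconv}(A\cup(-A))$, the absolutely convex hull taken in $L(X)$. Since $X$ embeds into $L(X)$, the set $A$ is functionally bounded in $L(X)$ and hence precompact. In a locally convex space the absolutely convex hull of a precompact set is precompact, so $W$ is precompact in $L(X)$. As $\tau$ is coarser than the topology of $L(X)$, $W$ is precompact in $V$, hence in $G$. Thus $G$ is locally precompact, and Theorem~\ref{th:lfd-subgroups} gives local finite dimensionality. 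This is the only place where the upper bound $\tau\subset L(X)$ is used.

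\textbf{Step 2 (finding such a $W$).} I argue by contradiction: assume $\supp W$ is not functionally bounded for every neighborhood $W\subseteq U$ of zero in $G$. The aim is to produce a sequence $\chi_k\to0$ in $G$ whose union of supports is not functionally bounded. Such a sequence is bounded in $L_p(X)$, so this would contradict Lemma~\ref{lemma:lpx-bounded}. The tool is Theorem~\ref{t1}, in the form of Proposition~\ref{p1}. For an open $O\subset X$, the set $\{\chi:\supp\chi\cap O\ne\es\}$ is open in $L_b(X)$, by the same argument as in Lemma~\ref{lemma:En-closed} with a bump function supported in $O$, and hence open in $V$. So, given a discrete family $\{V_j\}$ of open sets in $X$, the sets $O_k=\{\chi\in G:\supp\chi\cap\bigcup_{j\ge k}V_j\ne\es\}$ form a decreasing sequence of open subsets of $G$. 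If $0\in\cl{O_k}$ for all $k$, Theorem~\ref{t1} yields $\chi_k\in O_k$ with $\chi_k\to0$. Passing to a subsequence as in the proof of Lemma~\ref{lemma:lpx-bounded}, the supports of the $\chi_k$ meet infinitely many $V_j$, which is the desired contradiction.

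The main obstacle is uniformity. The assumption provides, for each neighborhood $W$, some discrete family witnessing that $\supp W$ is not functionally bounded, but the argument above needs one family $\{V_j\}$ that works simultaneously for all neighborhoods of zero. I would first attack this with the group structure. Choose neighborhoods $W_k$ with $W_k+\dots+W_k\subset U$ ($k$ summands) and use the $\Delta$-system technique of Lemma~\ref{lemma:lfd} to assemble sums of elements of $W_k$ with disjoint ``new'' support points, all lying in $U$ and hence in $B_n(X)$. If a single discrete family still cannot be extracted this way, the fallback is to work with the open sets $\{\chi\in G:|\chi(f)|>k\}$ for a suitably chosen unbounded $f\in C(X)$. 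Here one would use the norm bound $\|\chi\|\le n$ on $U$ to control $\chi(f)$ through the values of $f$ on $\supp\chi$, and then apply Theorem~\ref{t1} to these sets to contradict boundedness in $L_p(X)$ directly.
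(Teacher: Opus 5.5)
\textbf{There is a genuine gap: Step~2 is not carried out.} Your reduction to the norm-bounded neighborhood $U=G\cap B_n(X)$ is correct, and so is Step~1. Step~1 is the same computation the paper performs, but the paper applies it to a different object. Step~2 is where the whole difficulty of the theorem lies. You correctly see that you need a single discrete family $\{V_j\}$ that every $\supp W$ meets infinitely often, but neither of your two proposed attacks can deliver it.

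\begin{itemize}
\item The $\Delta$-system argument of Lemma~\ref{lemma:lfd} gets its contradiction from a bound on the \emph{number} of support points ($U\subset E_{2n}(X)$). Corollary~\ref{cor:Delta-system} also needs supports of uniformly bounded size. The bound $U\subset B_n(X)$ gives neither. Since the topology of $G$ is finer than the norm topology, you may take $W_k\subset B_{n/k}(X)$. Then every $k$-fold sum lies in $B_n(X)$ automatically, so nothing is contradicted.
\item The fallback is impossible outright. For $f\in C(X)$ the functional $\chi\mapsto\chi(f)$ is $\tau$-continuous. So $\{\chi:|\chi(f)|<1\}$ is a neighborhood of zero disjoint from $\{\chi\in G:|\chi(f)|>k\}$ for every $k\ge1$, and Theorem~\ref{t1} never applies to these sets.
\item Working with functionally bounded sets throughout does not help either. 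An increasing sequence of functionally bounded sets $A_n$ can have functionally bounded union. So a diagonal sequence whose support meets every $A_{n+1}\setminus A_n$ need not contradict Lemma~\ref{lemma:lpx-bounded}.
\end{itemize}

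\textbf{The missing idea.} Apply your Step~1 not to a neighborhood in $G$, but to the subgroup $H=\langle\xi\rangle$ generated by a sequence $\xi$ converging to zero in $G$.
\begin{itemize}
\item Here $\supp H=\supp\xi$ is functionally bounded by Lemma~\ref{lemma:lpx-bounded}, and $H\cap B_1(X)$ is a neighborhood of zero in $H$ by Corollary~\ref{c3}.
\item So $H$ is locally precompact, hence locally finite-dimensional, and this forces $\supp\xi$ to be \emph{finite}.
\end{itemize}
Finiteness is what resolves the uniformity problem. Suppose $G$ were not locally finite-dimensional. Then for every finite $M\subset X$ the subgroup $G\cap E(M)$ is closed and nowhere dense in $G$.
\begin{itemize}
\item The $\varkappa$-Fr\'echet--Urysohn property gives sequences $\xi_n\subset G\setminus E(M_n)$ converging to zero, where $M_0=\varnothing$ and $M_{n+1}=M_n\cup\supp\xi_n$ is finite.
\item Around each term $v$ of $\xi_n$, take the norm ball whose radius is the smallest absolute coefficient of $v$. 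It is open in $G$ by Corollary~\ref{c3}.
\item Every element of that ball has support containing $\supp v$, hence meeting $M_{n+1}\setminus M_n$.
\item Apply Proposition~\ref{p1} to the unions $U_n$ of these balls. This yields one sequence converging to zero whose support meets infinitely many of the pairwise disjoint sets $M_{n+1}\setminus M_n$.
\end{itemize}
That contradicts the finiteness just established. This is the paper's argument. Without it, or an equivalent way of producing one convergent sequence with too large a support, your proposal does not prove the theorem.
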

\begin{proof}
Let $G$ be a $\varkappa$-Fr\'echet--Urysohn subgroup of $E=(E(X), \tau)$.
From Corollary \ref{c3} it follows that the topology of $G$ is finer than the topology generated 
by~$\norm$.

\smallskip
\noindent
{\bf Claim 2.}
{\it
If $\xi=(v_n)_n\subset G$ is a sequence convergent in $G$, then the set $\supp \xi=\bigcup_{n\in\omega} \supp v_n$ is 
finite.}
 
\begin{proof}[Proof of Claim 2] 
Let $v$ be the limit of the sequence $\xi$.  We can assume that $v=0$, because 
$\supp v_n\subset \supp(v_n-v)\cup\supp v$ and $(v_n-v)_n$ is a 
sequence in $G$ converging to zero.
Since $\xi$ is a convergent sequence in $L_p(X)$ and hence bounded, it follows by Lemma~\ref{lemma:lpx-bounded} that 
$M=\supp \xi$ is a functionally bounded subset 
of $X$. Let $H=\langle \xi \rangle$ (the subgroup of $G$ generated by $\xi$; note that $H\subset E(M)$), 
and let $\widetilde H$ be the closure of $H$ in the completion 
$\widehat E$ of $E$.

We show that $\widetilde H$ is locally compact.
Since $M$ is functionally bounded in $X$ and hence in $L(X)$, it follows that $M$ is precompact 
in $L(X)$ (see, e.g., \cite[Lemma~2.9]{Gab_Schur}). Therefore, so is  its absolutely convex hull 
$D=E(M)\cap B_1(X)$ \cite[Proposition~1.8.2]{BSS2017tvs}.
Since the topology of $L(X)$ is finer than  that of $E$, the closure 
$\widetilde D$ of $D$ in $\widehat E$ is compact. Note that $B_1(X)\cap G$ is a neighborhood of zero in $G$, 
since  the topology of $G$ is finer than the norm topology generated by $\norm$. 
Therefore, $\widetilde D\cap \widetilde H$ is a compact neighborhood of zero in 
$\widetilde H$, because $B_1(X)\cap H$ is a neighborhood 
of zero in $H$, its closure in $\widetilde H$ is a neighborhood of zero in $\widetilde H$ ($H$ is dense 
in $\widetilde H$), and $B_1(X)\cap H\subset D$. Thus, $\widetilde H$ is locally compact.

Theorem~\ref{th:lc-subgroups} (applied  to the TVS $\widehat E$) implies that 
$\widetilde H$ is locally finite-di\-men\-sion\-al. Consequently, $H$ is locally finite-dimensional 
(by Remark~\ref{rem1}; see also the end of the proof of Theorem~\ref{th:lfd-subgroups}). Hence there exists a 
finite $S\subset X$ such that $H\cap E(S)$ is a neighborhood of zero in $H$.
Since $v_i\to0$ in $H$, there exists an $n\in\omega$ for which $\supp v_i\subset S$ for $i>n$.
Therefore, $M\subset S\cup \bigcup_{i\leq n}\supp v_i$ and hence $M$ is finite.
This completes the proof of Claim~2.
\end{proof}

\smallskip

Suppose that $G$ is not locally finite-dimensional.
Then, for any finite $M\subset X$, the set $G\cap E(M)$ is closed (because $E(M)$ is 
finite-dimensional) and nowhere dense in $G$ (otherwise, the subgroup $G\cap E(M)$ would be open in 
$G$). Since $G$ 
is $\varkappa$-Fr\'echet--Urysohn and $0\in\cl{G\setminus E(M)}$, there exists a sequence 
$\xi_M\subset G\setminus E(M)$ converging to zero. Note that $\supp\xi_M$ is finite by Claim~2.

Let us construct by induction on $n$ an increasing sequence $(M_n)_n$ of finite subsets of $X$ 
and a sequence $(\xi_n)_n$ of sequences converging to zero satisfying the following conditions:
\begin{enumerate}
\item[(a)] $M_0=\es$;
\item[(b)] $\supp \xi_i\subset M_n$ for $i<n$;
\item[(c)] $\xi_n\subset G\setminus E(M_n)$.
\end{enumerate}

\emph{Base case}.\enspace We set $M_0=\es$ and $\xi_0=\xi_\es$.

\emph{Induction step}.\enspace  For $n>0$, we set $M_n=M_{n-1}\cup \supp \xi_{n-1}$ and $\xi_n=\xi_{M_n}$. 

Suppose that $\xi_n=(v_{ni})_i$. For $i\in\om$, let $r_{ni}>0$ be the minimum of the absolute values of 
the (nonzero) coefficients of $v_{ni}$, and let 
$U_{ni}=\{v\in G:\|v-v_{ni}\|<r_{ni}\}$. By Corollary~\ref{c3} the set $U_{ni}$ is open in $G$. 
If $v\in U_{ni}$, then $\supp v_{ni}\subset \supp v$: otherwise, for $s\in \supp v_{ni}\setminus\supp v$, 
the coefficient of $s$ in $v-v_{ni}$ has absolute value at least $r_{ni}$. Let $U_n=\bigcup_{i\in \om}U_{ni}$.
Then $0\in \cl{U_n}$, because $v_{ni}\to0$ as $i\to\infty$. Moreover, 
$\supp u \cap (M_{n+1}\setminus M_n) \neq \es$ for $u\in U_n$, because 
$\supp v_{ni}\subset M_{n+1}$ by (b) and $\supp v_{ni}\not\subset M_n$ by~(c).

Theorem \ref{t1} implies that $G$ is strongly $\varkappa$-Fr\'echet--Urysohn.
By Proposition~\ref{p1} there exist a sequence $(u_k)_k$ converging to zero and a strictly increasing 
sequence $(n_k)_k$ of nonnegative integers such that $u_k\in U_{n_k}$. Since $\supp u_k \cap (M_{n_k+1}\setminus M_{n_k}) \neq \es$ 
for $k\in\om$ and the sets $M_{n+1}\setminus M_n$ are pairwise disjoint, 
it follows that the set $\supp (u_k)_k$ is infinite in contradiction to Claim~2.
This completes the proof.
\end{proof}

\begin{corollary}\label{cor:lbx-kfu}
For any topological space $X$, every $\varkappa$-Fr\'echet--Urysohn subgroup of $L_b(X)$ is locally finite-dimensional.  
\end{corollary}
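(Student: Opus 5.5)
The plan is to reduce to the $\beta X$ case via the embedding noted in Section~\ref{Preliminaries}: $L_b(X)$ is a linear subspace of $L_p(\beta X)$ carrying the induced topology. Since $\beta X$ is compact, the topologies of $L_p(\beta X)$ and $L_b(\beta X)$ coincide. Hence any $\varkappa$-Fr\'echet--Urysohn subgroup $G$ of $L_b(X)$ is a $\varkappa$-Fr\'echet--Urysohn subgroup of $L_p(\beta X)=(E(\beta X),\tau)$, where $\tau$ is the $L_p$-topology and so trivially lies between the topologies of $L_p(\beta X)$ and $L(\beta X)$. Theorem~\ref{t4}, applied to the space $\beta X$, then shows that $G$ is locally finite-dimensional as a subgroup of $L_p(\beta X)$.

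Next I transfer the conclusion back to $L_b(X)$. There is a finite-dimensional subspace $L\subset E(\beta X)$ such that $G\cap L$ is open in $G$. Since $G\subset E(X)$, the set $L\cap E(X)$ is a finite-dimensional subspace of $E(X)$ and $(L\cap E(X))\cap G=L\cap G$. This is exactly the argument used at the end of the proof of Theorem~\ref{th:criterion}. Consequently $G$ is locally finite-dimensional in $L_b(X)$.

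The one point requiring care is the identification of $L_b(X)$ with a topological linear subspace of $L_p(\beta X)$. The map $E(X)\to E(\beta X)$ induced by $X\subset\beta X$ is linear and injective. Every $f\in C_b(X)$ extends uniquely to some $\tilde f\in C(\beta X)$, and $\chi(f)=\chi(\tilde f)$ for $\chi\in E(X)$. Conversely, every $g\in C(\beta X)$ restricts to an element of $C_b(X)$. Therefore the functionals defining the two topologies on $E(X)$ coincide, and the embedding is a topological isomorphism onto its image. The paper states this fact explicitly, so I would simply cite it. No genuine obstacle is expected, since everything substantial is already contained in Theorem~\ref{t4}.
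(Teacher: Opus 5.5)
Your proof is correct and is precisely the alternative argument the paper gives parenthetically: apply Theorem~\ref{t4} to $\beta X$, using that $L_b(X)$ is a linear subspace of $L_p(\beta X)$, then pull the finite-dimensional subspace back to $E(X)$ as at the end of the proof of Theorem~\ref{th:criterion}. The paper's main argument instead embeds $L_b(X)$ isometrically via $J$ into $(C_b(X))'_{w^*}$ and invokes Theorem~\ref{th:weakstar}, which rests only on Banach--Alaoglu and uniform boundedness rather than on the more delicate support argument behind Theorem~\ref{t4}.
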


\begin{proof}
Consider the map $J$ from $(E(X),\norm)$ to the dual of the Banach space $(C_b(X),\norm_\infty)$
defined by the rule $J(\chi) = (f\mapsto\chi(f))$.
By Proposition~\ref{prop:lbx-bounded} $J$ is a linear isometry, and, by definition, the topology of $L_b(X)$ 
is induced by the weak$^*$ topology of $C_b(X)'$. Thus, $L_b(X)$ is a linear subspace of $(C_b(X))'_{w^*}$, and it 
remains to apply Theorem~\ref{th:weakstar}. (Alternatively, one can apply Theorem~\ref{t4} to $\beta X$, because 
$L_b(X)$ is a linear subspace of $L_p(\beta X)$.)
\end{proof}

\begin{theorem}\label{th:VX-kfu}
For any topological space $X$, every $\varkappa$-Fr\'echet--Urysohn subgroup of $V(X)$ is locally finite-dimensional.  
\end{theorem}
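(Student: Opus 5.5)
The plan is to combine Proposition~\ref{p7} with Proposition~\ref{prop:kfu-group-as} and Lemma~\ref{lemma:lfd}. Let $G$ be a $\varkappa$-Fr\'echet--Urysohn subgroup of $V(X)$. By Proposition~\ref{p7}, the family $\{E_n(X):n\in\omega\}$ absorbs convergent sequences in $V(X)$. By Remark~\ref{rem:finer}, the family $\{G\cap E_n(X):n\in\omega\}$ then absorbs convergent sequences in $G$.

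This family is increasing, since $E_n(X)\subset E_{n+1}(X)$. Its members are closed in $G$, because each $E_n(X)$ is closed in $L_b(X)$ by Lemma~\ref{lemma:En-closed}. The topology of $V(X)$ is finer than that of $L_b(X)$, so each $E_n(X)$ is also closed in $V(X)$.

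The hypotheses of Proposition~\ref{prop:kfu-group-as} are therefore satisfied for the topological group $G$ and the family $F_n=G\cap E_n(X)$. That proposition gives an $n$ for which $G\cap E_n(X)$ is a neighborhood of zero in $G$. In particular, $G\cap E_n(X)$ has nonempty interior in $G$, and Lemma~\ref{lemma:lfd} (applied with $\tau$ the topology of $V(X)$) yields that $G$ is locally finite-dimensional.

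I expect no real obstacle here, since all the ingredients are already established. The substantive work sits in Proposition~\ref{p7}, which relies on the inductive-limit description of $V(\beta X)$, and in the $\Delta$-system argument in Lemma~\ref{lemma:lfd}. The one point to check is that Remark~\ref{rem:finer} transfers absorption to the subspace $G$: a sequence converging in $G$ also converges in $V(X)$, so it lies in some $E_n(X)$ and hence in $G\cap E_n(X)$. As a corollary, Proposition~\ref{prop-new} then shows that every $\varkappa$-Fr\'echet--Urysohn linear subspace of $V(X)$ is finite-dimensional.
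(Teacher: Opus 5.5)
Your proposal is correct and matches the paper's own proof. Proposition~\ref{p7} together with Remark~\ref{rem:finer} shows that the increasing family of closed sets $G\cap E_n(X)$ absorbs convergent sequences in $G$. Proposition~\ref{prop:kfu-group-as} then makes one of these sets a neighborhood of zero in $G$, and Lemma~\ref{lemma:lfd} concludes.
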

\begin{proof}
It follows from Proposition~\ref{p7} that the family $\{E_n(X): n\in \omega\}$ absorbs convergent sequences in $V(X)$.
Let $G$ be a $\varkappa$-Fr\'echet--Urysohn subgroup of $V(X)$. 
Then the family  $\{F_n=G\cap E_n(X):n\in \omega\}$ absorbs convergent sequences in $G$.
The sets $F_n$ are closed in $G$ (by Lemma~\ref{lemma:En-closed}) and the sequence of $F_n$ is increasing.
It follows from Proposition~\ref{prop:kfu-group-as} that $G\cap E_n(X)$ is a neighborhood of zero in $G$ for some $n\in \omega$.
Lemma \ref{lemma:lfd} implies that $G$ is locally finite-dimensional.
\end{proof}

Theorems \ref{th:bsg-lfd}, \ref{t4}, and \ref{th:VX-kfu}, Corollary~\ref{cor:lbx-kfu}, and the fact that 
any locally finite-dimensional linear subspace of $E(X)$ is finite-dimensional (by Proposition~\ref{prop-new})
imply the following two theorems.

\begin{theorem}\label{th:lss-baire}
Let $\tau$ be a vector space topology on $E(X)$ finer than the topology of $L_b(X)$. Then any Baire linear subspace of $(E(X), \tau)$ is finite-dimensional.
\end{theorem}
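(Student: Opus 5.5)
The statement follows by combining Theorem~\ref{th:bsg-lfd} with Proposition~\ref{prop-new}.

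Let $W$ be a linear subspace of $(E(X),\tau)$ that is Baire in the induced topology. Since $W$ is in particular an additive subgroup of $(E(X),\tau)$, and $\tau$ is a vector space topology finer than that of $L_b(X)$, Theorem~\ref{th:bsg-lfd} applies and shows that $W$ is a locally finite-dimensional subgroup of $(E(X),\tau)$.

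That theorem rests on two facts. The sets $E_n(X)$ are closed in $L_b(X)$ by Lemma~\ref{lemma:En-closed}, hence closed in $\tau$, and they cover $E(X)$. So the Baire property of $W$ gives some $n$ for which $W\cap E_n(X)$ has nonempty interior in $W$. Lemma~\ref{lemma:lfd} then yields local finite-dimensionality.

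Next, Proposition~\ref{prop-new} applies to the TVS $V=(E(X),\tau)$ and its linear subspace $W$. It says that a linear subspace which is locally finite-dimensional as a subgroup is finite-dimensional. The reason is that such a subspace contains an open finite-dimensional subspace, so it is locally compact, and a locally compact TVS is finite-dimensional. Hence $W$ is finite-dimensional.

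I do not expect any genuine obstacle, since all the substantive work is in Theorem~\ref{th:bsg-lfd} and Lemma~\ref{lemma:lfd}. The only point to check is that the Baire property refers to the topology induced on $W$ from $\tau$, which is exactly the setting of Theorem~\ref{th:bsg-lfd}.
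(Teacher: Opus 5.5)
Your proof is correct and follows essentially the same route as the paper, which obtains this theorem by combining Theorem~\ref{th:bsg-lfd} (Baire subgroups of $(E(X),\tau)$ are locally finite-dimensional) with Proposition~\ref{prop-new} (a locally finite-dimensional linear subspace is finite-dimensional). Your recap of why Theorem~\ref{th:bsg-lfd} holds, namely closedness of $E_n(X)$, the Baire property, and Lemma~\ref{lemma:lfd}, also matches the paper's argument.
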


\begin{theorem}\label{th:lss-kfu}
Let $\tau$ be a vector space topology on $E(X)$. Suppose that one of the following conditions holds:
\begin{enumerate}
\item[{\rm(i)}]
$\tau$ is between the topologies of $L_p(X)$ and $L(X)$\textup;
\item[{\rm(ii)}]
$(E(X),\tau)=V(X)$\textup;
\item[{\rm(iii)}]
$(E(X),\tau)=L_b(X)$.
\end{enumerate}
Then any $\varkappa$-Fr\'echet--Urysohn linear subspace of $(E(X),\tau)$ is finite-di\-men\-sional.
\end{theorem}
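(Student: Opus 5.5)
The plan is to reduce Theorem~\ref{th:lss-kfu} to the corresponding results on subgroups and then apply Proposition~\ref{prop-new}. Let $L$ be a $\varkappa$-Fr\'echet--Urysohn linear subspace of $(E(X),\tau)$. Its additive group is a subgroup of $(E(X),\tau)$ carrying the induced topology, so it is a $\varkappa$-Fr\'echet--Urysohn subgroup. Once we know that $L$ is locally finite-dimensional as a subgroup, Proposition~\ref{prop-new} shows that $L$ is finite-dimensional.

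It therefore suffices to establish local finite dimensionality, and we treat the three cases separately.
\begin{enumerate}
\item[(i)] If $\tau$ lies between the topologies of $L_p(X)$ and $L(X)$, Theorem~\ref{t4} applies directly to $L$ viewed as a subgroup.
\item[(ii)] If $(E(X),\tau)=V(X)$, Theorem~\ref{th:VX-kfu} gives local finite dimensionality.
\item[(iii)] If $(E(X),\tau)=L_b(X)$, Corollary~\ref{cor:lbx-kfu} gives it.
\end{enumerate}
In each case $L\cap F$ is open in $L$ for some finite-dimensional subspace $F$ of $E(X)$. Hence $L$ contains an open finite-dimensional subspace, and Proposition~\ref{prop-new} concludes.

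There is no genuine obstacle, since all the substantive work sits in the cited subgroup theorems. The one point to check is that Proposition~\ref{prop-new} is applied with the ambient TVS equal to $(E(X),\tau)$. That proposition needs only that a locally compact TVS is finite-dimensional. This holds here because the open finite-dimensional subspace $L\cap F$ is topologically isomorphic to some $\R^n$ by Riesz' theorem, so $L$ is locally compact.
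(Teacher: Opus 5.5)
Your proposal is correct and follows the paper's argument: each case reduces to the subgroup result (Theorem~\ref{t4}, Theorem~\ref{th:VX-kfu}, and Corollary~\ref{cor:lbx-kfu} for (i)--(iii)), and Proposition~\ref{prop-new} then gives finite-dimensionality. Your check that Proposition~\ref{prop-new} applies inside $(E(X),\tau)$, via the open finite-dimensional subspace $L\cap F$, is also sound.
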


Below we provide an example which shows that  in Theorems~\ref{t4} and 
 \ref{th:lss-kfu}(i), unlike in Theorems~\ref{th:bsg-lfd} and~\ref{th:lss-baire}, 
the assumption that the topology under consideration is coarser than that of $L(X)$
cannot be dropped.

\begin{example}
\label{ex:norm}
Let $X$ be an infinite compact space. Recall that for every $\chi=\sum_{i=1}^n\lambda_i x_i \in E(X)$ the norm $\|\chi\|=\sum_{i=1}^n |\lambda_i|$
has been defined. Let $\tau_1$ be the topology on $E(X)$ generated by this norm 
$\norm$. Since $|\chi(f)|\le\|\chi\|\,\|f\|_\infty$ and all continuous functions on $X$ are bounded, $\tau_1$ is 
finer than the topology of $L_p(X)=L_b(X)$. The space $(E(X),\tau_1)$ is metrizable and infinite-dimensional, 
so that it is a Fr\'echet--Urysohn (and hence $\varkappa$-Fr\'echet--Urysohn) linear subspace of itself which is 
not finite-dimensional. Note that $\tau_1$ is not coarser than the topology of $L(X)$, because 
$\|x-y\|=2$ for distinct $x,y\in X$ and $X$ is not discrete. (Of course, $(E(X),\tau_1)$ is not Baire by virtue of Theorem~\ref{th:lss-baire}.)
\end{example}

\begin{question}
\label{q5}
Let $\tau$ be a vector space topology on $E(X)$ between the topologies of $L_p(X)$ and $V(X)$. Is it true that 
every $\varkappa$-Fr\'echet--Urysohn subgroup of $(E(X),\tau)$ is locally finite-dimensional? 
\end{question}

\end{document}